\documentclass[12pt]{article}

\usepackage[T2A]{fontenc}
\usepackage[utf8]{inputenc}
\usepackage[english]{babel}

\usepackage{amsmath, amssymb, amsthm}
\usepackage{bm} 

\usepackage{graphicx}
\usepackage{url}

\newcommand{\Li}{\mathrm{Li}_2} 

\newtheorem{theorem}{Theorem}[section]

\newtheorem{definition}[theorem]{Definition}
\newtheorem{example}[theorem]{Example}

\begin{document}

\title{A Family of Generating Functions for Reciprocal Binomial Coefficients and Its Applications}

\author{ Dmitry Kruchinin, Vladimir Kruchinin}

\maketitle

\begin{abstract}
A generating function for reciprocal binomial coefficients is written down, integral representations of this function are obtained, generating functions for sums of reciprocal binomial coefficients are derived, new identities are obtained, including identities connecting reciprocal binomial coefficients with harmonic numbers and Fibonacci numbers. The application of the found functions for evaluating infinite numerical sequences involving reciprocal binomial coefficients is demonstrated.
\end{abstract}

\noindent \textit{Keywords: generating function, reciprocal binomial coefficient, Riordan array, harmonic numbers, Fibonacci numbers}

\section{Introduction}

A reciprocal binomial coefficient is the expression
$$
T(n,m)={\binom{n}{m}}^{-1}
$$
$T(n,m)$ represents the following triangle:
$$
\begin{array}{ccccc}\\
\frac{1}{\binom{0}{0}}\\
\frac{1}{\binom{1}{0}}&\frac{1}{\binom{1}{1}}\\
\frac{1}{\binom{2}{0}}&\frac{1}{\binom{2}{1}}&\frac{1}{\binom{2}{2}}\\
\vdots& \vdots & \vdots &  \ddots\\
\frac{1}{\binom{n}{0}}&\frac{1}{\binom{n}{1}}&\frac{1}{\binom{n}{2}}&\ldots&\frac{1}{\binom{n}{n}}\\

\end{array}
$$

The combinatorial interpretation of this triangle is based on an exponential representation, similar to problems from \cite{Graham}:
$$
T_e(n,m)=\frac{n!}{\binom{n}{m}}=m!\,(n-m)!
$$
This triangle is known in the On-Line Encyclopedia of Integer Sequences under the number A098361\cite{A098361}. The simplest combinatorial interpretation: the number of pairs of permutations of elements from two sets consisting of $k$ and $(n-k)$ elements. In quantum physics, for describing a quantum state \cite{RBS}. There is also an interpretation related to: This sequence gives the variance of the 2-dimensional Polynomial Chaoses \cite{Ghanem}. There is also a combinatorial interpretation for the row sums of the triangle $T_e(n,m)$ (see sequence A003149 \cite{A003149}).

There is a number of studies devoted to various sums with reciprocal binomial coefficients \cite{Sury1,Pla,Trif,Mansour,Sury2,Ji,Belbachir,Batir22}, in which a number of identities, explicit and recurrent formulas, and their generating functions are obtained. There are also works devoted to the study of reciprocal central binomial coefficients, providing their generating functions and explicit formulas \cite{Sprugnoli,Sofo,Batir22,Bhat2024}. There are also identities for the product of reciprocal binomial coefficients and harmonic numbers \cite{Sofo,Boyadzhiev,Kalmykov}. However, a generating function for the triangle of reciprocal binomial coefficients was absent.
The goal of this research is to obtain a generating function for the triangle of reciprocal coefficients
$$
A(x,y)=\sum_{n\geqslant 0}\sum_{m\geqslant 0} {\binom{n}{m}}^{-1}x^n\,y^m
$$
and its applications for obtaining new identities and generating functions for sums of reciprocal binomial coefficients.

\section{Preliminaries}

Classical methods for working with generating functions are described in detail in fundamental works \cite{Wilf,Stanley,Flajolet}.

\subsection{Notation}
Let us introduce notation used in this work:
\begin{itemize}
    \item $[x^n\,y^m]G(x,y)$ — coefficient of $x^n y^m$ in the expansion of the generating function $G(x,y)$;
    \item $\delta_{n,k}$ — Kronecker delta ($\delta_{n,k}=1$ when $n=k$, otherwise $0$).
\end{itemize}

\subsection{Standard Generating Functions (Reference)}
For convenience, we provide the explicit form of some classical generating functions:
\[
\begin{aligned}
    \log(1+x) &= \sum_{n>0} \frac{(-1)^{n-1}}{n}\,x^n,\\
    \mathrm{Li}_2(x) &= \sum_{n>0} \frac{x^n}{n^2},\\
    H(x) &= \frac{1}{1-x}\log\left(\frac{1}{1-x}\right) = \sum_{n \ge 0} H_n x^n,\\
    F(x) &= \frac{x}{1-x-x^2} = \sum_{n \ge 0} F_n x^n,
\end{aligned}
\]
where $H_n$ is the $n$-th harmonic number, and $F_n$ is the $n$-th Fibonacci number ($F_0=0, F_1=1$).

\subsection{Riordan Arrays and the Main Theorem}

Let formal power series be given:
\[
F(x)=\sum_{n\geqslant 0} f_n x^n, \qquad G(x)=\sum_{n>0} g_n x^n \quad (G(0)=0).
\]
\begin{definition}
    A Riordan array generated by the pair $(F(x), G(x))$ is the family of numbers
    \[
    R(n,k)=[x^n]\,F(x)\,G(x)^k, \qquad n,k\ge 0.
    \]
\end{definition}

For Riordan arrays, the following important theorem holds, which will be used in proving the main results.

\begin{theorem}[Composition Theorem for Riordan Arrays \cite{Shapiro}]
    Let $Q(x)=\sum_{k\ge 0} q_k x^k$ be an arbitrary formal power series. Then the coefficients of the generating function
    \[
    B(x)=F(x)\,Q\bigl(G(x)\bigr)
    \]
    are expressed via the Riordan array $R(n,k)$:
    \begin{equation}\label{eq:riordan-composition}
    [x^n]B(x)=\sum_{k=0}^n R(n,k)\,q_k.
    \end{equation}
\end{theorem}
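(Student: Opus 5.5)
The statement is a direct coefficient computation, so I will expand $Q(G(x))$ termwise and extract coefficients from each term separately. First I write
\[
B(x)=F(x)\,Q\bigl(G(x)\bigr)=\sum_{k\ge 0} q_k\,F(x)\,G(x)^k .
\]
Because $G(0)=0$, the series $G(x)^k$ has order at least $k$, so $F(x)G(x)^k=O(x^k)$. Consequently only finitely many summands contribute to any fixed power of $x$. This makes the infinite sum a well-defined formal power series: it converges in the $x$-adic topology. It also lets me extract coefficients term by term.

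Next I apply $[x^n]$ to both sides and use linearity of coefficient extraction over this locally finite sum:
\[
[x^n]B(x)=\sum_{k\ge 0} q_k\,[x^n]F(x)G(x)^k=\sum_{k\ge0} q_k\,R(n,k),
\]
where the last step is just the definition of the Riordan array $R(n,k)$.

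Finally I truncate the sum. For $k>n$, the factor $G(x)^k$ is divisible by $x^{k}$ and hence by $x^{n+1}$, so $R(n,k)=0$. The sum therefore reduces to $\sum_{k=0}^n R(n,k)q_k$, which is \eqref{eq:riordan-composition}.

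The only point needing care is justifying that the composition $Q(G(x))$ is well defined and that coefficient extraction commutes with the infinite sum over $k$. Both follow from the order estimate $\operatorname{ord}\,G(x)^k\ge k$, and I will state this explicitly rather than appeal to analytic convergence. No other obstacle is expected.
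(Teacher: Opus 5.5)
Your proof is correct and complete. The paper gives no proof of this statement; it simply cites Shapiro et al., where it appears as the fundamental theorem of Riordan arrays. Your argument is the standard verification: expand $Q(G(x))=\sum_{k\ge 0} q_k G(x)^k$, and observe that $G(x)^k$ has order at least $k$ because $G(0)=0$. The sum therefore converges $x$-adically, so coefficients can be extracted termwise, and $R(n,k)=0$ for $k>n$. Because you work purely formally, the argument holds verbatim over any commutative coefficient ring, for example $\mathbb{C}[[x]]$. That is what the paper implicitly needs when it later applies the theorem with a second variable carried along as a parameter.
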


\subsection{Compositions of Bivariate Generating Functions}
The work will also require some results on composition of two-dimensional generating functions.\! The main idea is as follows.\! If  $G(x,y)=\sum_{n,m} g_{n,m}x^n y^m$ and $G(0,0)=0$, then the coefficients of the composition $F\bigl(G(x,y)\bigr)$ are expressed via powers of $G(x,y)$. Namely, if $G^{\Delta}(n,m,k)=[x^n y^m]G(x,y)^k$, then
\begin{equation}\label{eq:2d-composition-idea}
[x^n y^m]F\bigl(G(x,y)\bigr)=\sum_{k\ge 0} G^{\Delta}(n,m,k)\,f_k,
\end{equation}
where $F(z)=\sum_{k\ge 0} f_k z^k$. A detailed exposition of this method and explicit formulas for $G^{\Delta}(n,m,k)$ can be found in \cite{KruBivar}.

Next we will need two simple special cases, easily obtained from the general scheme.

\subsubsection{Diagonal Sum}
If $F(x,y)=\sum_{n,m} f(n,m)x^n y^m$, then upon substitution $y=x$, we get
\begin{equation}\label{eq:diag-sum}
[x^n]F(x,x)=\sum_{m=0}^n f(n-m,m).
\end{equation}

\subsubsection{Weighted Row Sum}
Similarly, upon substitution $y=a$ (where $a$ is a constant), we have
\begin{equation}\label{eq:row-sum}
[x^n]F(x,a)=\sum_{m=0}^n f(n,m)\,a^m.
\end{equation}

These two relations will be actively used in Section 4 to obtain generating functions for various sums of reciprocal binomial coefficients.

\section{A Family of Generating Functions for Reciprocal Binomial Coefficients}

\subsection{Generating Function for Reciprocal Binomial Coefficients $\binom{n}{m}^{-1}$}

To write the generating function for reciprocal binomial coefficients $A(x,y)$, we prove the following theorem.

\begin{theorem}
The function
\begin{equation}\label{Lgf77}
A(x,y)=-{{y\,\log \left(\left(1-x\right)\,\left(1-x\,y\right)\right)
 }\over{\left(-x\,y+y+1\right)^2}}+{{x\,y^2}\over{\left(1-x\,y\right)
 \,\left(-x\,y+y+1\right)}}+{{1}\over{1-x}}
\end{equation}
is the generating function for the set of reciprocal binomial coefficients
$$
[x^n\,y^m]A(x,y)=\binom{n}{m}^{-1}.
$$
\end{theorem}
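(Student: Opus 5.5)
The plan is to start from the Beta-integral representation of the reciprocal binomial coefficient,
$$
\binom{n}{m}^{-1}=(n+1)\int_0^1 t^m(1-t)^{n-m}\,dt,\qquad 0\le m\le n,
$$
sum it against $x^n y^m$, exchange summation and integration, and evaluate the resulting elementary integral in closed form. Throughout we may work with $|x|,|y|$ small (say $|x|,|y|<1/4$). There every series converges absolutely and uniformly in $t\in[0,1]$, so both sides are analytic near the origin. The identity of analytic functions then gives equality of the Taylor coefficients $[x^n y^m]$. We use the convention $\binom{n}{m}^{-1}=0$ for $m>n$, so $m$ runs only up to $n$.

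First step: write $n=m+k$ with $k\ge 0$ and put $u=xyt$, $v=x(1-t)$. Then
$$
A(x,y)=\int_0^1 \sum_{m,k\ge 0}(m+k+1)\,u^m v^k\,dt .
$$
The inner sum is $\frac{d}{dz}\Bigl[\frac{z}{(1-zu)(1-zv)}\Bigr]_{z=1}$, which equals
$$
\frac{1-uv}{(1-u)^2(1-v)^2}.
$$
Equivalently, it is $\frac{1}{(1-u)(1-v)}+\frac{u}{(1-u)^2(1-v)}+\frac{v}{(1-u)(1-v)^2}$. So $A(x,y)$ is the integral over $t\in[0,1]$ of a rational function of $t$. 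Its denominators are the squares of the two linear forms $1-xyt$ and $1-x+xt$.

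Second step: decompose this rational function of $t$ into partial fractions. The two roots in $t$ are $1/(xy)$ and $(x-1)/x$. Their difference is $(1+y-xy)/(xy)$, and this is exactly where the factor $(-xy+y+1)$ in (\ref{Lgf77}) comes from.

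The simple-pole terms integrate to logarithms. At the endpoints, $1-xyt$ gives $\log(1-xy)-\log 1$, and $1-x+xt$ gives $\log 1-\log(1-x)$. These should combine, after the residues are computed, into the single term $-y\log\bigl((1-x)(1-xy)\bigr)/(1+y-xy)^2$. The double-pole terms integrate to rational functions. After simplification they should produce $\frac{xy^2}{(1-xy)(1+y-xy)}+\frac{1}{1-x}$. A quick sanity check is $y=0$: only $m=0$ survives and we need $A(x,0)=1/(1-x)$, which matches.

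The main obstacle is the partial-fraction bookkeeping in the second step. We must get the residues at the double poles right and verify that the logarithmic coefficients of $\log(1-x)$ and $\log(1-xy)$ come out equal. That equality is what allows them to merge into $\log((1-x)(1-xy))$. If this becomes messy, a fallback is to verify the result directly. We would check that the right-hand side of (\ref{Lgf77}) and the integral agree at $y=0$. We would then check that both satisfy the same first-order linear ODE in $y$, obtained by differentiating under the integral sign. An independent check is to extract $[x^n y^m]$ from (\ref{Lgf77}) by expanding each term. This yields finite sums of the form $\sum_j \frac{1}{j}\binom{\cdot}{\cdot}$, which can be compared with the known identity $\binom{n}{m}^{-1}=\frac{n+1}{2^{n+1}}\sum_{j=m+1}^{n+1}\frac{2^j}{j}$ for small cases.
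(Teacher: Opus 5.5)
Your approach is correct, and it takes a different route from the paper's. The paper starts from the alternating identity $\sum_{k}(-1)^k\binom{n}{k}\frac{m}{m+k}=\binom{n+m}{n}^{-1}$. It evaluates the simpler kernel $F(x,y)=\sum_{n\ge 0,\,k\ge 1}\frac{k}{n+k}x^ny^k=y\int_0^1\frac{dt}{(1-xt)(1-yt)^2}$ by partial fractions, then obtains $A$ by a binomial-transform (Riordan) substitution into $F$, and it leaves the final simplification unshown. You sum the Beta representation against $x^ny^m$ directly, so you need neither that identity (which is itself just $m\int_0^1 t^{m-1}(1-t)^n\,dt$ expanded) nor a composition step.

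The bookkeeping you flag as the main obstacle closes quickly. Put $p=1-xyt$, $q=1-x+xt$ and $\Delta=xy\,q+x\,p=x(1+y-xy)$; note that $\Delta$ does not depend on $t$. Then
\[
\frac{1-uv}{p^2q^2}=\frac{1}{p^2q}+\frac{1}{pq^2}-\frac{1}{pq}
=\frac{xy}{\Delta p^2}+\frac{x}{\Delta q^2}+\frac{x^2y}{\Delta^2}\Bigl(\frac{xy}{p}+\frac{x}{q}\Bigr).
\]
Integrating over $[0,1]$ gives $\frac{1}{1+y-xy}\bigl(\frac{y}{1-xy}+\frac{1}{1-x}\bigr)-\frac{y\log((1-x)(1-xy))}{(1+y-xy)^2}$. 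The two logarithms automatically share the coefficient $y/(1+y-xy)^2$. The rational part equals $\frac{xy^2}{(1-xy)(1+y-xy)}+\frac{1}{1-x}$, since both reduce to $\frac{1+y-2xy}{(1-x)(1-xy)(1+y-xy)}$. Partial fractions need $x\neq 0$; the case $x=0$ follows by continuity. So the ODE fallback is unnecessary, and the small-case coefficient comparison could only be a sanity check, not a proof.

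What your route buys is a self-contained, fully explicit derivation. It also sidesteps the paper's composition step, which is misstated there: $\frac{1}{xy-1}\bigl(1+F(x,\frac{xy}{xy-1})\bigr)$ equals $-1$ at $y=0$ rather than $\frac{1}{1-x}$. The substitution that actually reproduces the closed form is $\frac{1}{1-x}\bigl(1+F(\frac{x}{x-1},xy)\bigr)$. In exchange, the paper's route produces the auxiliary generating function $F$ and ties the result to the Riordan-array machinery it reuses later.
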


\begin{proof}
We use the known identity \cite{Prudnikov}
\begin{equation}\label{LidenPrudnik}
\sum_{k=0}^{n}{{(-1)^{k}\,{{n}\choose{k}}}\frac{m}{m
 +k}}=\binom{n+m}{n}^{-1}
\end{equation}

The left-hand side of this identity can be considered as a composition of the Riordan array
$$
\left(\frac{1}{y-1},\frac{y}{y-1}\right)
$$
and the generating function
\[
F(x,y) = \sum_{n=0}^{\infty} \sum_{k=1}^{\infty} \frac{k}{n+k} \, x^n y^k,
\]
Let's find the generating function $F(x,y)$.
Note that
   \[
   \frac{1}{n+k} = \int_0^1 t^{\,n+k-1} \, dt, \quad n \ge 0,\; k \ge 1.
   \]
   Hence,
   \[
   \frac{k}{n+k} = k \int_0^1 t^{\,n+k-1} \, dt.
   \]

Substitute into the double sum
   \[
   F(x,y) = \sum_{n=0}^{\infty} \sum_{k=1}^{\infty} k \, x^n y^k \int_0^1 t^{\,n+k-1} \, dt.
   \]
   In the domain $|x|<1$, $|y|<1$, we can interchange the order of summation and integration:
   \[
   F(x,y) = \int_0^1 \left( \sum_{n=0}^{\infty} (xt)^n \right) 
           \left( \sum_{k=1}^{\infty} k \, y^k t^{\,k-1} \right) dt.
   \]

 Compute the series:
   \[
   \sum_{n=0}^{\infty} (xt)^n = \frac{1}{1-xt},
   \]
   \[
   \sum_{k=1}^{\infty} k \, y^k t^{\,k-1} 
       = \frac{1}{t} \sum_{k=1}^{\infty} k (yt)^k 
       = \frac{1}{t} \cdot \frac{yt}{(1-yt)^2}
       = \frac{y}{(1-yt)^2}.
   \]
   Substituting:
   \[
   F(x,y) = y \int_0^1 \frac{1}{(1-xt)(1-yt)^2} \, dt.
   \]

Perform partial fraction decomposition.   Find coefficients $A$, $B$, $C$ such that
   \[
   \frac{1}{(1-xt)(1-yt)^2} = \frac{A}{1-xt} + \frac{B}{1-yt} + \frac{C}{(1-yt)^2}.
   \]
   Multiplying by $(1-xt)(1-yt)^2$, we get the identity
   \[
   1 = A(1-yt)^2 + B(1-xt)(1-yt) + C(1-xt).
   \]
   By substitution we find:
   \[
   \begin{aligned}
   t = \frac{1}{x} &\Rightarrow A = \frac{x^2}{(x-y)^2},\\[2mm]
   t = \frac{1}{y} &\Rightarrow C = \frac{y}{y-x} = -\frac{y}{x-y},\\[2mm]
   t = 0 &\Rightarrow B = 1 - A - C = \frac{-xy}{(x-y)^2}.
   \end{aligned}
   \]

Perform integration; denote
   \[
   J(x,y) = \int_0^1 \frac{dt}{(1-xt)(1-yt)^2}.
   \]
   Then
   \[
   \begin{aligned}
   J(x,y) &= \int_0^1 \left[ \frac{A}{1-xt} + \frac{B}{1-yt} + \frac{C}{(1-yt)^2} \right] dt \\[1mm]
          &= -\frac{A}{x} \ln(1-x) - \frac{B}{y} \ln(1-y) + \frac{C}{1-y}.
   \end{aligned}
   \]
   Substitute $A$, $B$, $C$:
   \[
   \begin{aligned}
   -\frac{A}{x} &= -\frac{x}{(x-y)^2},\qquad
   -\frac{B}{y} = \frac{x}{(x-y)^2},\qquad
   \frac{C}{1-y} = -\frac{y}{(x-y)(1-y)}.
   \end{aligned}
   \]
   Hence,
   \[
   J(x,y) = \frac{x}{(x-y)^2} \ln\!\left( \frac{1-y}{1-x} \right) - \frac{y}{(x-y)(1-y)}.
   \]

Obtain the final expression for $F(x,y)$. Multiply $J(x,y)$ by $y$:
   \[
   F(x,y) = y \, J(x,y) 
          = \frac{xy}{(x-y)^2} \ln\!\left( \frac{1-y}{1-x} \right) 
            - \frac{y^2}{(x-y)(1-y)}.
   \]

Check correctness: the obtained formula is valid for $|x|<1$, $|y|<1$, $x \neq y$.
   \begin{itemize}
     \item For $x=0$:
       \[
       F(0,y) = \frac{y}{1-y} = \sum_{k=1}^{\infty} y^k,
       \]
       which agrees with the original series for $n=0$ (since $\frac{k}{0+k}=1$).
     \item The limit as $x \to y$ can be computed, yielding the one-dimensional case.
   \end{itemize}

Now note that the right-hand side of identity \ref{LidenPrudnik} equals 1 at zero. Therefore, for the composition we write the function $1+F(x,y)$ and $\binom{n}{m}$ - triangle ($m\leqslant n$), then the desired generating function is
$$
A(x,y)=\frac{1}{xy-1}\left(1+F\left(x,\frac{xy}{xy-1}\right)\right)
$$ 
After substitution and simplification we obtain the desired function
$$A(x,y)=-{{y\,\log \left(\left(1-x\right)\,\left(1-x\,y\right)\right)
 }\over{\left(-x\,y+y+1\right)^2}}+{{x\,y^2}\over{\left(1-x\,y\right)
 \,\left(-x\,y+y+1\right)}}+{{1}\over{1-x}}$$ 

   Thus, the theorem is proved.
\end{proof}

\subsection{Generating Functions for Reciprocal Binomial Coefficients
$\binom{2n}{m}^{-1}$, $\binom{n}{2m}^{-1}$, $\binom{2n-1}{m}^{-1}$, $\binom{n}{2m-1}^{-1}$}

Using the main generating function $A(x,y)$, one can obtain generating functions for subsets of reciprocal binomial coefficients in which either rows or columns have a specific parity. These functions are obtained using standard methods for extracting even and odd parts of a power series.

\subsubsection{Extracting Even and Odd Rows}

To extract rows with even indices in the triangle $\binom{n}{m}^{-1}$, the following relation is used:

\[
\frac{A(\sqrt{x}, y) + A(-\sqrt{x}, y)}{2} = \sum_{n \geq 0} \sum_{m \geq 0} \binom{2n}{m}^{-1} x^n y^m.
\]

Denote this function by $A_1(x,y)$:

\begin{equation}\label{A1}
A_1(x,y) = \frac{A(\sqrt{x}, y) + A(-\sqrt{x}, y)}{2}, \quad
[x^n y^m] A_1(x,y) = \binom{2n}{m}^{-1}.
\end{equation}

To extract rows with odd indices, a similar formula is applied:

\[
\frac{A(\sqrt{x}, y) - A(-\sqrt{x}, y)}{2\sqrt{x}} = \sum_{n \geq 1} \sum_{m \geq 0} \binom{2n-1}{m}^{-1} x^n y^m.
\]

Denote this function by $A_3(x,y)$:

\begin{equation}\label{A3}
A_3(x,y) = \frac{A(\sqrt{x}, y) - A(-\sqrt{x}, y)}{2\sqrt{x}}, \quad
[x^n y^m] A_3(x,y) = \binom{2n-1}{m}^{-1}.
\end{equation}

\subsubsection{Extracting Even and Odd Columns}

Similarly, to extract columns with even indices, we use:

\[
\frac{A(x, \sqrt{y}) + A(x, -\sqrt{y})}{2} = \sum_{n \geq 0} \sum_{m \geq 0} \binom{n}{2m}^{-1} x^n y^m.
\]

Denote this function by $A_2(x,y)$:

\begin{equation}\label{A2}
A_2(x,y) = \frac{A(x, \sqrt{y}) + A(x, -\sqrt{y})}{2}, \quad
[x^n y^m] A_2(x,y) = \binom{n}{2m}^{-1}.
\end{equation}

To extract columns with odd indices, we apply:

\[
\frac{A(x, \sqrt{y}) - A(x, -\sqrt{y})}{2\sqrt{y}} = \sum_{n \geq 0} \sum_{m \geq 1} \binom{n}{2m-1}^{-1} x^n y^m.
\]

Denote this function by $A_4(x,y)$:

\begin{equation}\label{A4}
A_4(x,y) = \frac{A(x, \sqrt{y}) - A(x, -\sqrt{y})}{2\sqrt{y}}, \quad
[x^n y^m] A_4(x,y) = \binom{n}{2m-1}^{-1}.
\end{equation}

\subsubsection{Explicit Expressions for the Obtained Functions}

Substituting the expression for $A(x,y)$ from (\ref{Lgf77}) into formulas (\ref{A1})–(\ref{A4}), explicit analytic expressions for $A_1$, $A_2$, $A_3$, $A_4$ can be obtained. They represent rational-logarithmic functions similar to the original $A(x,y)$, but with a more complex structure due to radicals.

For example, for $A_1(x,y)$ we obtain:

\[
A_1(x,y) =\! \left[ 
\frac{y \log\bigl((1\!-\sqrt{x})(1\!-\sqrt{x}\,y)\bigr)}{(-1 - y+ \sqrt{x}\,y)^2}\!
+\! \frac{\sqrt{x}\,y^2}{(1\!-\sqrt{x}\,y)(1\! + y\! - \sqrt{x}\,y)}
+ \frac{1}{1\!-\sqrt{x}} \right.
\]
\[
\left. 
- \frac{y \log\bigl((1+\sqrt{x})(1+\sqrt{x}\,y)\bigr)}{(1 + y + \sqrt{x}\,y)^2}
+ \frac{(-\sqrt{x})\,y^2}{(1+\sqrt{x}\,y)(1 + y + \sqrt{x}\,y)}
+ \frac{1}{1+\sqrt{x}} \right]\frac{1}{2}. 
\]

where the symbol $(\sqrt{x} \to -\sqrt{x})$ means repeating the previous terms with $\sqrt{x}$ replaced by $-\sqrt{x}$.

Similar expressions can be written for $A_2$, $A_3$, $A_4$, but they are quite bulky. However, these representations are useful for further analysis, for example, when computing sums or searching for identities.

\subsubsection{Combinatorial Interpretation}

The obtained functions correspond to the following combinatorial objects:
\begin{itemize}
\item $A_1(x,y)$ — reciprocal binomial coefficients for even rows of Pascal's triangle,
\item $A_2(x,y)$ — reciprocal binomial coefficients for even columns,
\item $A_3(x,y)$ — reciprocal binomial coefficients for odd rows,
\item $A_4(x,y)$ — reciprocal binomial coefficients for odd columns.
\end{itemize}

These functions can be used to study sums and identities related to the corresponding subsets of reciprocal binomial coefficients, as will be demonstrated in subsequent sections.

\section{Integral Representation of the Generating\\ Function \(A(x,y)\)}\label{sec:integral-repr}

Integral representations of the generating function \(A(x,y)\) allow us to obtain new families of generating functions for reciprocal binomial coefficients with additional factors of the form \(1/n\) or \(1/m\). Consider three main integral transformations:

\[
I(x,y)=\int A(x,y)\,dx, \quad
J(x,y)=\iint A(x,y)\,dx\,dy, \quad
\]
\[
K(x,y)=\frac{\partial}{\partial x} J(x,y) = \int A(x,y)\,dy.
\]

Each of these functions generates its own triangle of reciprocal binomial coefficients, which can be further modified by extracting even/odd rows or columns.

\subsection{Generating Function \(I(x,y)\)}

The function \(I(x,y)\) is obtained by integrating \(A(x,y)\) with respect to \(x\):

\[
I(x,y) = \int A(x,y)\,dx
       = \frac{\log\bigl(1 - x + (x^2 - x)y\bigr)}{(x-1)y - 1} + C.
\]

The constant of integration \(C = 0\), since at \(x=0\) the series should start with zero coefficients. Then the expansion coefficients of \(I(x,y)\) are:

\[
[x^n y^m] I(x,y) = \frac{1}{n \binom{n-1}{m}}, \quad n \geq 1,\; 0 \leq m \leq n-1.
\]

\subsubsection{Modifications of \(I(x,y)\) by Extracting Even/Odd Parts}

Applying standard formulas for extracting even and odd components, we obtain four new generating functions:

\begin{align*}
I_1(x,y) &= \frac{I(\sqrt{x}, y) + I(-\sqrt{x}, y)}{2}, &
[x^n y^m] I_1(x,y) &= \frac{1}{2n \binom{2n-1}{m}},\\
I_2(x,y) &= \frac{I(x, \sqrt{y}) + I(x, -\sqrt{y})}{2}, &
[x^n y^m] I_2(x,y) &= \frac{1}{n \binom{n-1}{2m}},\\
I_3(x,y) &= \frac{I(\sqrt{x}, y) - I(-\sqrt{x}, y)}{2\sqrt{x}}, &
[x^n y^m] I_3(x,y) &= \frac{1}{(2n-1) \binom{2n-2}{m}},\\
I_4(x,y) &= \frac{I(x, \sqrt{y}) - I(x, -\sqrt{y})}{2\sqrt{y}}, &
[x^n y^m] I_4(x,y) &= \frac{1}{n \binom{n-1}{2m-1}}.
\end{align*}

\subsection{Generating Function \(J(x,y)\)}

The function \(J(x,y)\) is obtained by double integration of \(A(x,y)\):

\[
J(x,y) = \iint A(x,y)\,dx\,dy
       = \frac{\Li\bigl(x(1 + (1-x)y)\bigr) - \Li(x)}{1-x},
\]
where \(\Li(z) = \sum_{n \geq 1} \frac{z^n}{n^2}\) is the dilogarithm.

The expansion coefficients of \(J(x,y)\) are:

\[
[x^n y^m] J(x,y) = \frac{1}{n \cdot m \cdot \binom{n-1}{m-1}}, \quad n \geq 1,\; 1 \leq m \leq n.
\]

\subsubsection{Modifications of \(J(x,y)\) by Extracting Even/Odd Parts}

Similarly to the previous case, we obtain:

\begin{align*}
J_1(x,y) &= \frac{J(\sqrt{x}, y) + J(-\sqrt{x}, y)}{2}, &
[x^n y^m] J_1(x,y) &= \frac{1}{2n \cdot m \cdot \binom{2n-1}{m-1}},\\
J_2(x,y) &= \frac{J(x, \sqrt{y}) + J(x, -\sqrt{y})}{2}, &
[x^n y^m] J_2(x,y) &= \frac{1}{n \cdot 2m \cdot \binom{n-1}{2m-1}},\\
J_3(x,y) &= \frac{J(\sqrt{x}, y) - J(-\sqrt{x}, y)}{2\sqrt{x}}, &
[x^n y^m] J_3(x,y) &= \frac{1}{(2n-1) \cdot m \cdot \binom{2n-2}{m-1}},\\
J_4(x,y) &= \frac{J(x, \sqrt{y}) - J(x, -\sqrt{y})}{2\sqrt{y}}, &
[x^n y^m] J_4(x,y) &= \frac{1}{n \cdot (2m-1) \cdot \binom{n-1}{2m-2}}.
\end{align*}

\subsection{Generating Function \(K(x,y)\)}

The function \(K(x,y)$ is the derivative of $J(x,y)$ with respect to $x$:

\[
K(x,y) = \frac{\partial J(x,y)}{\partial x}
       =
\]
\[
 = \frac{1}{1-x}\left[ \frac{\log(1-x)}{x} - \frac{(1 + (1-2x)y) \log\bigl(1 - x(1 + (1-x)y)\bigr)}{x(1 + (1-x)y)} \right]+
\]
\[
+ \frac{\mathrm{Li}_2\bigl(x(1 + (1-x)y)\bigr) - \mathrm{Li}_2(x)}{(1-x)^2}.
\]
The expansion coefficients of \(K(x,y)\) are:

\[
[x^n y^m] K(x,y) = \frac{1}{m \cdot \binom{n}{m-1}}, \quad n \geq 0,\; 1 \leq m \leq n+1.
\]

\subsubsection{Modifications of \(K(x,y)\) by Extracting Even/Odd Parts}

\begin{align*}
K_1(x,y) &= \frac{K(\sqrt{x}, y) + K(-\sqrt{x}, y)}{2}, &
[x^n y^m] K_1(x,y) &= \frac{1}{m \cdot \binom{2n}{m-1}},\\
K_2(x,y) &= \frac{K(x, \sqrt{y}) + K(x, -\sqrt{y})}{2}, &
[x^n y^m] K_2(x,y) &= \frac{1}{2m \cdot \binom{n}{2m-1}},\\
K_3(x,y) &= \frac{K(\sqrt{x}, y) - K(-\sqrt{x}, y)}{2\sqrt{x}}, &
[x^n y^m] K_3(x,y) &= \frac{1}{m \cdot \binom{2n-1}{m-1}},\\
K_4(x,y) &= \frac{K(x, \sqrt{y}) - K(x, -\sqrt{y})}{2\sqrt{y}}, &
[x^n y^m] K_4(x,y) &= \frac{1}{(2m-1) \cdot \binom{n}{2m-2}}.
\end{align*}

\subsection{Summary Table of the Family of Generating Functions}

Table \ref{tabl1} presents the complete family of generating functions obtained from \(A(x,y)\) by integration and extraction of even/odd components.

\begin{table}[h]
\centering
\caption{Family of generating functions for reciprocal binomial coefficients}
\label{tabl1}
\begin{tabular}{|c|c|c|c|c|}
\hline
 & \(A(x,y)\) & \(I(x,y)\) & \(J(x,y)\) & \(K(x,y)\) \\
\hline
0 & \(\dfrac{1}{\binom{n}{m}}\) & \(\dfrac{1}{n\binom{n-1}{m}}\) & \(\dfrac{1}{n m \binom{n-1}{m-1}}\) & \(\dfrac{1}{m \binom{n}{m-1}}\) \\
\hline
1 & \(\dfrac{1}{\binom{2n}{m}}\) & \(\dfrac{1}{2n \binom{2n-1}{m}}\) & \(\dfrac{1}{2n m \binom{2n-1}{m-1}}\) & \(\dfrac{1}{m \binom{2n}{m-1}}\) \\
\hline
2 & \(\dfrac{1}{\binom{n}{2m}}\) & \(\dfrac{1}{n \binom{n-1}{2m}}\) & \(\dfrac{1}{n \cdot 2m \cdot \binom{n-1}{2m-1}}\) & \(\dfrac{1}{2m \binom{n}{2m-1}}\) \\
\hline
3 & \(\dfrac{1}{\binom{2n-1}{m}}\) & \(\dfrac{1}{(2n-1) \binom{2n-2}{m}}\) & \(\dfrac{1}{(2n-1) m \binom{2n-2}{m-1}}\) & \(\dfrac{1}{m \binom{2n-1}{m-1}}\) \\
\hline
4 & \(\dfrac{1}{\binom{n}{2m-1}}\) & \(\dfrac{1}{n \binom{n-1}{2m-1}}\) & \(\dfrac{1}{n (2m-1) \binom{n-1}{2m-2}}\) & \(\dfrac{1}{(2m-1) \binom{n}{2m-2}}\) \\
\hline
\end{tabular}
\end{table}

\subsection{Combinatorial Interpretation}

Each row of the table corresponds to a specific subset of reciprocal binomial coefficients:
\begin{itemize}
\item Row 0 — the original triangle \(\binom{n}{m}^{-1}\).
\item Rows 1 and 3 — even and odd rows of the triangle.
\item Rows 2 and 4 — even and odd columns of the triangle.
\end{itemize}
Integral transformations add factors \(n\) or \(m\) in the denominator, which corresponds to taking partial sums or weighting the coefficients.

Thus, the constructed family of generating functions provides a unified framework for studying a wide class of sums and identities related to reciprocal binomial coefficients.

\section{Convergence Analysis of the Constructed\\ Generating Functions}\label{sec:convergence}

All generating functions obtained in this work are initially considered as elements of the ring of formal power series $\mathbb{C}[[x,y]]$.
The corresponding identities are established by methods of formal transformations
(composition of Riordan arrays, integration and differentiation of formal series).
The obtained analytical expressions (containing logarithms, dilogarithms,
rational functions) define the \emph{analytic continuation} of the formal
series in domains of complex variables. This section briefly describes the domains
of convergence of the corresponding power series, justifying the legitimacy
of substituting specific values and limit transitions used in Section~\ref{sec:infinite-sums}.

\subsection{Convergence of the Main Series $A(x,y)$}

Consider the two-dimensional power series
\[
A(x,y) = \sum_{n\geq 0}\sum_{m=0}^n \binom{n}{m}^{-1} x^n y^m .
\]
To estimate its radius of convergence, we use the asymptotics of the coefficients.
For fixed $\alpha = m/n \in (0,1)$ the following holds:
\[
\binom{n}{\alpha n}^{-1} \;\sim\;
\frac{\sqrt{2\pi n\alpha(1-\alpha)}}{\alpha^{\alpha n}(1-\alpha)^{(1-\alpha)n}},
\qquad n\to\infty .
\]
The function $\rho(\alpha) = \alpha^{\alpha}(1-\alpha)^{1-\alpha}$ reaches its minimum
$\rho_{\min} = 1/2$ at $\alpha=1/2$ and its maximum value $1$ at the ends of the interval
$\alpha=0,1$. Taking into account the factor $y^m = y^{\alpha n}$ leads to the convergence condition
\[
|x|\cdot \max_{0\leq\alpha\leq 1} \left( \frac{|y|^\alpha}{\rho(\alpha)} \right) < 1 .
\]
This inequality holds, for example, when
\begin{equation}\label{eq:conv-cond-asympt}
|x| < 1, \qquad |xy| < 1, \qquad |x|\sqrt{|y|} < 2 .
\end{equation}

The analytical expression (\ref{Lgf77}) for $A(x,y)$ has singularities when
denominators vanish, which determines a broader domain of analyticity:
\begin{equation}\label{eq:conv-cond-analytic}
|x|<1, \qquad |xy|<1, \qquad |1+y-xy|>0 .
\end{equation}
In particular, this ensures convergence for:
\begin{itemize}
\item $y=1$: $|x|<1$;
\item $x=1$: $|y|<1$;
\item $x=y$: $|x|<(\sqrt{5}-1)/2$ (the smaller root of the equation $1-x-x^2=0$).
\end{itemize}

\subsection{Convergence of Integral Modifications}

The functions $I(x,y)$, $J(x,y)$, $K(x,y)$ are obtained by formal integration
and differentiation of the series $A(x,y)$. Since these operations do not reduce
the radius of convergence of a formal series, the mentioned functions are analytic, at least,
in the same domain (\ref{eq:conv-cond-analytic}) as the original series.
Moreover, explicit expressions for these functions (Section~\ref{sec:integral-repr})
allow us to refine their domains:
\begin{itemize}
\item $I(x,y)$ is analytic for $|x|<1$, $|xy|<1$ and away from zeros of the denominator $(x-1)y-1$;
\item $J(x,y)$ is analytic where $\mathrm{Li}_2(x)$ and
      $\mathrm{Li}_2\!\bigl(x(1+(1-x)y)\bigr)$ are defined;
\item $K(x,y)$ is analytic in the domain where the series for $J(x,y)$ and their derivatives converge.
\end{itemize}

\subsection{Convergence of Modifications with Parity}\label{sec:parity-mods}

The functions $A_1, A_2, A_3, A_4$ (Section~\ref{sec:parity-mods}) and their integral
analogues are obtained by substitutions $\sqrt{x}, -\sqrt{x}, \sqrt{y}, -\sqrt{y}$
into the corresponding series. These substitutions transform the convergence domain
(\ref{eq:conv-cond-analytic}) into domains of the form:
\[
|\sqrt{x}|<1, \qquad |\sqrt{x}y|<1, \qquad |1+y-\sqrt{x}y|>0,
\]
which is equivalent to
\[
|x|<1, \qquad |y|<\frac{1}{\sqrt{|x|}}, \qquad |1+y-\sqrt{x}y|>0,
\]
and similarly for other cases. Thus, all constructed functions
are analytic in neighborhoods of the origin sufficient for performing
substitutions of specific values used in Section~\ref{sec:infinite-sums}.

\subsection{Legitimacy of Limit Transitions as $x\to 1$}\label{subsec:limit-justification}

In Section~\ref{sec:infinite-sums}, to compute infinite sums over columns,
limits of the form $\lim_{x\to 1^-} F(x,y)$ are used, where $F$ is one of the constructed
functions. These transitions are justified by Abel's theorem on the continuity of a power series
on the boundary of the convergence disk, provided the series converges at the point $x=1$.
For the series considered in this work, convergence at $x=1$ for fixed
$y$ ($|y|<1$) follows from the estimate
\[
\binom{n}{m}^{-1} \leq \frac{m!(n-m)!}{n!} \sim \frac{\text{const}}{n^m},
\]
which ensures absolute convergence of the series $\sum_{n\geq m} \binom{n}{m}^{-1} y^m$
for $|y|<1$ for any fixed $m\geq 2$. Uniform convergence in $x$
on the interval $[0,1]$ allows passing to the limit under the summation sign.

\subsection{Domains of Validity for Substitutions in Section~\ref{sec:infinite-sums}}

All specific substitutions performed in Section~\ref{sec:infinite-sums}
lie within the convergence domains of the corresponding series:
\begin{itemize}
\item $x=\tfrac12, y=1$: condition (\ref{eq:conv-cond-analytic}) holds;
\item $x=\tfrac12, y=\tfrac12$: condition (\ref{eq:conv-cond-analytic}) holds;
\item limits as $x\to 1^-$ are taken along the real axis inside the convergence domain.
\end{itemize}
Thus, all numerical equalities and identities obtained in Section~\ref{sec:infinite-sums}
are correct not only as formal identities, but also as equalities
of analytic functions in the indicated domains.

\section{Applications of the Obtained Family of Functions}

The obtained family of generating functions allows:
1. Finding generating functions for expressions containing reciprocal binomial coefficients.
2. Finding explicit expressions for generating functions of the form $A(B(x,y),y)$ or $A(x,B(x,y))$.
3. Obtaining identities for expressions with reciprocal binomial coefficients.
4. Obtaining finite expressions for infinite sums involving reciprocal binomial coefficients.

\subsection{Finding Generating Functions for Sums of Reciprocal Binomial Coefficients}

The obtained family of generating functions \( A(x,y) \), \( I(x,y) \), \( J(x,y) \), \( K(x,y) \) and their modifications allows us to uniformly find generating functions for a wide class of sums containing reciprocal binomial coefficients. In this section we demonstrate how to use these functions to obtain generating functions for sums over rows, diagonals, and other subsets of the reciprocal coefficients triangle.

\subsubsection{General Approach}

Let a sum of the form be given
\[
S(n) = \sum_{m \in M_n} w(m) \cdot T(n,m),
\]
where \( T(n,m) \) is one of the reciprocal binomial coefficients, \( M_n \) is the set of values of \( m \), and \( w(m) \) is a weight function (e.g., $a^{n-m} b^m$).

If the bivariate generating function is known
\[
F(x,y) = \sum_{n,m} T(n,m) x^n y^m,
\]
then the generating function for \( S(n) \) can be obtained by substituting \( y = g(x) \) or \( y = \text{const} \), depending on the structure of the sum:
\begin{itemize}
\item If the sum is taken over the entire row (\( m = 0,\dots,n \)), then set \( y = 1 \).
\item If the sum has the form \( \sum_{m} T(n-m, m) \), then set \( y = x \) (diagonal sum).
\item If the weights depend polynomially on \( m \), one can use differentiation or integration with respect to \( y \).
\end{itemize}

\subsubsection{Row Sums of the Full Triangle \( \binom{n}{m}^{-1} \)}\label{LS9}

Consider the sum
\[
a(n) = \sum_{m=0}^{n} \binom{n}{m}^{-1} a^{n-m} b^m.
\]
Its generating function is obtained from \( A(x,y) \) by substituting \( y = b/a \) and replacing \( x \to a x \):
\[
A_{\text{row}}(x) = A\!\left(a x, \frac{b}{a}\right).
\]
After substitution we obtain:
\[
A_{\text{row}}(x) =
- \frac{b \log\!\bigl((1-ax)(1-bx)\bigr)}{(1 - a b x + b)^2}
+ \frac{b^2 x}{(1-bx)(1 - a b x + b)}
+ \frac{1}{1 - a x}.
\]
For \( a = b = 1 \) we get the generating function for the sum of row elements of the original triangle.

\subsubsection{Diagonal Sums}

Consider the sum
\[
a(n) = \sum_{m=0}^{\lfloor n/2 \rfloor} \frac{1}{\binom{n-m}{m}}.
\]
This corresponds to substituting \( y = x \) into \( A(x,y) \):
\[
A_{\text{diag}}(x) = A(x, x).
\]
After simplification:
\[
A_{\text{diag}}(x) =
\frac{2x+1}{(x-1)(x+1)(x^2-x-1)}
- \frac{x \log\!\bigl((1-x)^2(1+x)\bigr)}{(1 - x - x^2)^2}.
\]
If we consider this function as an exponential generating function, then it generates the sequence \( n! \, a(n) \), which is listed in OEIS under number A358446.

\subsubsection{Sums for Subsets with Row or Column Parity}

Using functions \( A_1, A_2, A_3, A_4 \), we can find generating functions for sums over rows or columns with a specific parity.

1. Sum over an even row \( \binom{2n}{m}^{-1} \):
   \[
   S_1(n) = \sum_{m=0}^{2n} \binom{2n}{m}^{-1}.
   \]
   Generating function:
   \[
   S_1(x) = A_1(x, 1) =
   \frac{4\sqrt{x} \log\!\left(\frac{1+\sqrt{x}}{1-\sqrt{x}}\right) - (x+4)\log(1-x)}{(x-4)^2}
   + \frac{2x+4}{(x-4)(x-1)}.
   \]
   Corresponding sequence: OEIS A358791.

2. Diagonal sum for even rows:
   \[
   S_2(n) = \sum_{m=0}^{\lfloor 2n/3 \rfloor} \frac{1}{\binom{2(n-m)}{m}}.
   \]
   Generating function: \[S_2(x) = A_1(x, x).\]

3. Row sum for even columns \( \binom{n}{2m}^{-1} \):
   \[
   S_3(n) = \sum_{m=0}^{\lfloor n/2 \rfloor} \binom{n}{2m}^{-1}.
   \]
   Generating function:
   \[
   S_3(x) = A_2(x, 1) =
   \frac{\log(1\!-x^2)}{2x^2}\! + \frac{3}{(x-2)(x-\!1)(x+\!1)}  - \frac{\log(1\!-x)}{(2-x)^2}.
   \]

4. Diagonal sum for even columns:
   \[
   S_4(n) = \sum_{m=0}^{\lfloor n/3 \rfloor} \frac{1}{\binom{n-m}{2m}}.
   \]
   Generating function:
   \[
   S_4(x) = A_2(x, x).
   \]

5. Row sum for odd rows \( \binom{2n-1}{m}^{-1} \):
   \[
   S_5(n) = \sum_{m=0}^{2n-1} \binom{2n-1}{m}^{-1}.
   \]
   Generating function:
   \[
   S_5(x) = A_3(x, 1) =
   \frac{6x}{x^2-5x+4}
   + \frac{\sqrt{x}\log(1+\sqrt{x})}{(2+\sqrt{x})^2}
   - \frac{\sqrt{x}\log(1-\sqrt{x})}{(2-\sqrt{x})^2}.
   \]

6. Diagonal sum for odd rows:
   \[
   S_6(n) = \sum_{m=0}^{\lfloor (2n-1)/3 \rfloor} \frac{1}{\binom{2(n-m)-1}{m}}.
   \]
   Generating function:
   \[
   S_6(x) = A_3(x, x).
   \]

7. Row sum for odd columns \( \binom{n}{2m-1}^{-1} \):
   \[
   S_7(n) = \sum_{m=1}^{\lfloor (n+1)/2 \rfloor} \binom{n}{2m-1}^{-1}.
   \]
   Generating function:
   \[
   S_7(x) = A_4(x, 1) =
   - \frac{\log\!\bigl(1-x^2\bigr)}{x^2}
   - \frac{1}{x+1}
   + \frac{x}{(1-x)(2-x)}
   - \frac{2\log(1-x)}{(2-x)^2}.
   \]

8. Diagonal sum for odd columns:
   \[
   S_8(n) = \sum_{m=1}^{\lfloor (n+1)/3 \rfloor} \frac{1}{\binom{n-m}{2m-1}}.
   \]
   Generating function:
   \[
   S_8(x) = A_4(x, x).
   \]

\subsubsection{Sums for Integral Modifications}

Similarly, one can find generating functions for sums related to \( I(x,y) \),\\ \( J(x,y) \), \( K(x,y) \). Here are a few typical examples.

1. Row sum for \( I(x,y) \):
   \[
   \sum_{m=0}^{n} \frac{1}{n \binom{n-1}{m}}.
   \]
   Generating function:
   \[
   S_9(x) = I(x, 1) = \frac{2\log(1-x)}{2-x}.
   \]

2. Diagonal sum for \( I(x,y) \):
   \[
   \sum_{m=0}^{\lfloor (n-1)/2 \rfloor} \frac{1}{(n-m) \binom{n-m-1}{m}}.
   \]
   Generating function:
   \[
   S_{10}(x) = I(x, x) = \frac{\log\!\bigl(1 - x + x(x^2 - x)\bigr)}{(x-1)x - 1}.
   \]

3. Row sum for \( J(x,y) \):
   \[
   \sum_{m=1}^{n} \frac{1}{n m \binom{n-1}{m-1}}.
   \]
   Generating function:
   \[
   S_{11}(x) = J(x, 1) = \frac{\Li(2x - x^2) - \Li(x)}{1-x}.
   \]

4. Diagonal sum for \( J(x,y) \):
   \[
   \sum_{m=1}^{\lfloor n/2 \rfloor} \frac{1}{m (n-m) \binom{n-m-1}{m-1}}.
   \]
   Generating function:
   \[
   S_{12}(x) = J(x, x) = \frac{\Li(x + x^2 - x^3) - \Li(x)}{1-x}.
   \]

5. Row sum for \( K(x,y) \):
   \[
   \sum_{m=1}^{n+1} \frac{1}{m \binom{n}{m-1}}.
   \]
   Generating function:
   \[
   S_{13}(x) = K(x, 1) =
   \frac{\Li(2x - x^2) - \Li(x)}{(1-x)^2}
   + \frac{(3x-2)\log(1-x)}{(x-2)(x-1)x}.
   \]

6. Diagonal sum for \( K(x,y) \):
   \[
   \sum_{m=1}^{\lfloor (n+1)/2 \rfloor} \frac{1}{m \binom{n-m}{m-1}}.
   \]
   Generating function $K(x, x)$:
   \[
\begin{aligned}
   S_{14}(x) &= \frac{\Li(x + x^2 - x^3) - \Li(x)}{(1-x)^2} \\
   &\quad + \frac{(2x^2-x-1)\log(1+x) + (3x^2-x-1)\log(1-x)}{(x-1)x(x^2-x-1)}.
\end{aligned}
\]

\subsubsection{Conclusion}

The presented method shows how a single family of generating functions can be used to systematically find generating functions for diverse sums of reciprocal binomial coefficients. This opens up possibilities for:
\begin{itemize}
\item obtaining new combinatorial identities,
\item analyzing the asymptotics of sums,
\item finding connections with known numerical sequences,
\item verifying hypotheses using computer algebra.
\end{itemize}

In the following sections, we will demonstrate how these generating functions can be used to prove identities and compute infinite sums.

\subsection{Identities}

The obtained family of generating functions allows us to prove various combinatorial and analytic identities connecting reciprocal binomial coefficients with classical sequences — harmonic numbers, Fibonacci numbers, and others. In this section, we systematically derive several characteristic identities.

\subsubsection{Identities Connecting Sums of Reciprocal Coefficients with Harmonic Numbers}

\begin{theorem} For any \( n \geq 1 \), the following identity holds:
\begin{equation}\label{iden7}
\frac{1}{n}\sum_{m=0}^{n-1}\frac{1}{\binom{n-1}{m}}
= H_n - \sum_{k=1}^{n-1}\frac{H_{n-k}}{2^k},
\end{equation}
where \( H_n = 1 + \frac12 + \dots + \frac1n \) is the \( n \)-th harmonic number.
\end{theorem}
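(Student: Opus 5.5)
The plan is to compare generating functions of both sides, using the row-sum function $S_9(x)=I(x,1)$ from the previous subsection for the left-hand side and the harmonic generating function $H(x)$ from the Preliminaries for the right-hand side. Both sides should turn out to have the generating function $-\log(1-x)/(1-x/2)$.

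\emph{Left-hand side.} By the coefficient formula $[x^n y^m]I(x,y)=\frac{1}{n\binom{n-1}{m}}$ and the row-sum rule (\ref{eq:row-sum}) with $a=1$, the left-hand side of (\ref{iden7}) is $[x^n]I(x,1)$. I would substitute $y=1$ into the closed form of $I$ directly rather than quote $S_9$, so that any sign slip in the displayed formula is caught. The argument of the logarithm becomes
\[
1-x+x^2-x=(1-x)^2,
\]
and the denominator becomes $(x-1)-1=x-2$. Hence
\[
I(x,1)=\frac{2\log(1-x)}{x-2}=\frac{-\log(1-x)}{1-x/2}.
\]
This form is pinned down by the check $[x^1]=1$ at $n=1$. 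Expanding it as a Cauchy product gives the auxiliary identity
\[
\frac1n\sum_{m=0}^{n-1}\binom{n-1}{m}^{-1}=\sum_{k=0}^{n-1}\frac{1}{2^k(n-k)}.
\]

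\emph{Right-hand side.} The sequence $H_n$ has generating function $H(x)=\frac{1}{1-x}\log\frac{1}{1-x}$. The sum $\sum_{k=1}^{n-1}H_{n-k}2^{-k}$ is the coefficient of $x^n$ in $H(x)\cdot\frac{x/2}{1-x/2}$; the term $k=n$ can be included freely because $H_0=0$. So the right-hand side has generating function
\[
H(x)\Bigl(1-\frac{x/2}{1-x/2}\Bigr)=H(x)\,\frac{1-x}{1-x/2}=\frac{-\log(1-x)}{1-x/2}.
\]
This coincides with $I(x,1)$, so comparing coefficients of $x^n$ for $n\ge1$ proves (\ref{iden7}).

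\emph{Main obstacle.} The only delicate step is getting $I(x,1)$ right, in particular its sign and the $(1-x)^2$ simplification, since the formula for $S_9$ as printed appears to have the opposite sign. I would confirm it against the small cases $n=1,2$, where both sides equal $1$. As an independent fallback, the auxiliary identity above could be proved directly by induction, using the standard recurrence for $\sum_m\binom{N}{m}^{-1}$.
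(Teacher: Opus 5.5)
Your proposal is correct and follows the paper's route: evaluate $I(x,1)$, write it as $\frac{1-x}{1-x/2}\,H(x)$, and compare coefficients of $x^n$. Your sign check is also right. The printed $S_9(x)=\frac{2\log(1-x)}{2-x}$ has the wrong sign, since its $[x^1]$ coefficient is $-1$. The paper's proof silently compensates with a second sign flip when it rewrites $S_9$ as $-\frac{1-x}{2-x}\cdot\frac{2\log(1-x)}{1-x}$, so your direct substitution $I(x,1)=\frac{2\log(1-x)}{x-2}$ is the correct starting point.
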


\begin{proof} In Section \ref{LS9} we obtained the generating function:
\[
S_9(x) = I(x,1) = \sum_{n\geq 1}\left(\frac{1}{n}\sum_{m=0}^{n-1}\frac{1}{\binom{n-1}{m}}\right)x^n
= \frac{2\log(1-x)}{2-x}.
\]
Transform the right-hand side:
\[
\frac{2\log(1-x)}{2-x} = -\frac{1-x}{2-x}\cdot\frac{2\log(1-x)}{1-x}.
\]
Note that
\[
\frac{1}{1-x} = \sum_{n\geq 0}x^n, \quad
\frac{1}{2-x} = \frac12\sum_{k\geq 0}\left(\frac{x}{2}\right)^k,
\]
and for harmonic numbers it is known:
\[
\frac{\log(1-x)}{1-x} = -\sum_{n\geq 1} H_n x^n.
\]
Multiplying the series and comparing coefficients of \( x^n \), we obtain the required identity.
\end{proof}

Identity \eqref{iden7} shows that the arithmetic mean of reciprocal binomial coefficients in row \( n-1 \) is expressed via a weighted sum of harmonic numbers.

\subsubsection{Identity Connecting Diagonal Sums with Fibonacci Numbers}

\begin{theorem}
 For \( n \geq 1 \) the following holds:
\begin{equation}\label{iden6}
\sum_{m=0}^{\lfloor (n-1)/2 \rfloor} \frac{1}{(n-m)\binom{n-m-1}{m}}
= \sum_{i=1}^{n} \frac{F_i\bigl((-1)^n + 2(-1)^{i-1}\bigr)}{n-i+1},
\end{equation}
where \( F_i \) is the \( i \)-th Fibonacci number (\( F_1 = F_2 = 1 \)).
\end{theorem}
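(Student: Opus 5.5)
The plan is to read the identity off the generating function $S_{10}(x)=I(x,x)$ from the subsection on sums for integral modifications. By the diagonal-sum rule \eqref{eq:diag-sum}, applied to the coefficients $[x^ny^m]I(x,y)=\frac{1}{n\binom{n-1}{m}}$, the left-hand side of \eqref{iden6} is $[x^n]\,I(x,x)$. So it suffices to split $I(x,x)$ into a product of a Fibonacci-type rational series and a logarithmic series and then compare coefficients, exactly as in the proof of \eqref{iden7}.

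\textbf{Step 1: factor the logarithm's argument.} We have
\[
1-x+x(x^2-x)=1-x-x^2+x^3=(1-x)^2(1+x),
\]
so that
\[
\log\bigl(1-x+x(x^2-x)\bigr)=2\log(1-x)+\log(1+x)=-\sum_{j\ge1}\frac{2+(-1)^j}{j}\,x^j .
\]
This is the source of the factor $\bigl((-1)^n+2(-1)^{i-1}\bigr)/(n-i+1)$ after reindexing $j=n-i+1$.

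\textbf{Step 2: expand the rational factor and convolve.} Write the rational prefactor as a Fibonacci generating function. The target sign pattern $F_i(-1)^{i-1}\bigl(2+(-1)^{n-i+1}\bigr)$ is equivalent to the right-hand side of \eqref{iden6}. It corresponds to negafibonacci numbers $F_{-i}=(-1)^{i-1}F_i$, that is, to the series
\[
\frac{x}{1+x-x^2}=\sum_{i\ge1}(-1)^{i-1}F_i x^i .
\]
I would put the prefactor in this form, with possibly a shift by one power of $x$ absorbed into the logarithmic series. Taking the Cauchy product with the series from Step 1 and setting $j=n-i+1$ then gives the sum over $i=1,\dots,n$ stated in \eqref{iden6}.

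\textbf{Main obstacle.} The real difficulty is the sign and normalisation bookkeeping, not the convolution itself. Taken literally, the printed denominator $(x-1)x-1=x^2-x-1$ gives $1/(1-x-x^2)$ and hence ordinary Fibonacci numbers, up to an overall sign. A quick check at $n=2$ gives $[x^2]$ of the printed $S_{10}$ equal to $5/2$, whereas the left-hand side of \eqref{iden6} equals $1/2$.

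So before convolving I would first recompute $I(x,x)$ directly from its definition. Concretely, I would check the low coefficients of the closed form for $I(x,y)$ against $\frac{1}{n\binom{n-1}{m}}$ and, if needed, rederive $\int A\,dx$ from \eqref{Lgf77}, so as to pin down the correct rational factor. I expect it to be of the form $\pm x^{a}/(1+x-x^2)$, which produces the alternating Fibonacci signs.

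Once that prefactor is fixed, I would check the cases $n=1,2,3$ numerically against both sides of \eqref{iden6}. The identity then follows by comparing coefficients of $x^n$.
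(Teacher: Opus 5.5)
\textbf{Verdict: right approach, same as the paper's, but the proof is left unfinished because of a sign slip.} Your strategy matches the paper's proof. You identify the left side of \eqref{iden6} as $[x^n]\,I(x,x)$, factor the logarithm's argument as $(1-x)^2(1+x)$, expand the rational factor in signed Fibonacci numbers, and take a Cauchy product. Your Steps 1 and 2 are correct. The ``main obstacle'' you describe is not real. It comes from a sign slip: $(x-1)x-1=x^2-x-1=-(1+x-x^2)$, not $-(1-x-x^2)$. So the printed $S_{10}$ already carries the alternating Fibonacci denominator:
\[
I(x,x)=\frac{\log\bigl((1-x)^2(1+x)\bigr)}{x^2-x-1}
=-\frac{1}{1+x-x^2}\Bigl(-\sum_{j\ge1}\frac{2+(-1)^j}{j}\,x^j\Bigr)
=\frac{1}{1+x-x^2}\sum_{j\ge1}\frac{2+(-1)^j}{j}\,x^j .
\]
Its coefficients begin $1,\tfrac12,\tfrac56$, which match the left side. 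Your value $\tfrac52$ at $n=2$ is what $1/(1-x-x^2)$ would give. The closed form $I(x,y)=-\log\bigl((1-x)(1-xy)\bigr)/(1+y-xy)$ is also correct, so no rederivation from \eqref{Lgf77} is needed.

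Because of the slip, your proposal stops at ``$\pm x^{a}/(1+x-x^2)$'' and never fixes the prefactor, so as written it is not yet a proof. The missing line is that the prefactor is exactly $-1/(1+x-x^2)$, with $a=0$, and its minus sign cancels the one in your Step~1 series. Using $\frac{1}{1+x-x^2}=\sum_{k\ge0}(-1)^kF_{k+1}x^k$ and setting $i=n-j+1$, you get
\[
[x^n]\,I(x,x)=\sum_{i=1}^{n}(-1)^{i-1}F_i\,\frac{2+(-1)^{n-i+1}}{n-i+1}.
\]
This is \eqref{iden6}, since $(-1)^{i-1}\bigl(2+(-1)^{n-i+1}\bigr)=(-1)^n+2(-1)^{i-1}$. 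Your caution about signs is reasonable: the paper's own intermediate display prints the denominator as $x^2+x-1$ and gives the $2\log(1-x)$ term the wrong sign. The correct split is $I(x,x)=-\bigl(2\log(1-x)+\log(1+x)\bigr)/(1+x-x^2)$.
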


\begin{proof}
 Consider the diagonal sum for \( I(x,y) \):
\[
S_{10}(x) = I(x,x) = \sum_{n\geq 1}\left(
\sum_{m=0}^{\lfloor (n-1)/2 \rfloor} \frac{1}{(n-m)\binom{n-m-1}{m}}
\right)x^n.
\]
Using the explicit form of \( I(x,x) \), we get:
\[
I(x,x) = \frac{\log\!\bigl((1-x)(1-x^2)\bigr)}{x^2+x-1}
= \frac{2\log(1-x)}{1+x-x^2} - \frac{\log(1+x)}{1+x-x^2}.
\]
Note that
\[
\frac{1}{1+x-x^2} = \sum_{n\geq 0} (-1)^n F_{n+1} x^n.
\]
Multiplication by \( \log(1\pm x) \) corresponds to taking a convolution:
\[
\frac{\log(1-x)}{1+x-x^2} = -\sum_{n\geq 1}\left(
\sum_{i=1}^n \frac{(-1)^{n-i} F_{n-i+1}}{i}
\right)x^n.
\]
Combining terms, we arrive at formula \eqref{iden6}.
\end{proof}

This identity establishes an unexpected connection between diagonal\\ sums of reciprocal binomial coefficients and Fibonacci numbers.

\subsubsection{Representation of Reciprocal Coefficients via Alternating Sums}

\begin{theorem}
For \( 0 \leq m \leq n \) the following holds:
\begin{equation}\label{iden5}
\frac{1}{n\binom{n-1}{m}}
= \sum_{k=0}^n (-1)^{n-k} \binom{k}{m}\binom{m}{n-k}\frac{1}{k+1}.
\end{equation}
\end{theorem}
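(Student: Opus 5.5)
The plan is to compute the right-hand side of \eqref{iden5} in closed form and then compare it with the printed left-hand side. I expect the comparison to show that \eqref{iden5}, as worded, fails for every admissible pair $(n,m)$; that is the conclusion the plan aims for.

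\emph{Step 1: a test case.} Take $n=1$, $m=0$. The left-hand side is $\frac{1}{1\cdot\binom{0}{0}}=1$. On the right, the $k=0$ term contains $\binom{0}{1}=0$, and the $k=1$ term is $\binom{1}{0}\binom{0}{0}\frac12=\frac12$. So the right-hand side equals $\frac12\neq 1$. Also, for $m=n$ the left-hand side contains $\binom{n-1}{n}=0$ and is undefined. So the claim cannot hold on the whole range $0\le m\le n$.

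\emph{Step 2: closed form of the right-hand side.} I will show
\[
\sum_{k=0}^n (-1)^{n-k}\binom{k}{m}\binom{m}{n-k}\frac{1}{k+1}=\frac{1}{(n+1)\binom{n}{m}}.
\]
The method follows the integral device used in the proof of \eqref{Lgf77}.
\begin{enumerate}
\item Write $\frac{1}{k+1}=\int_0^1 t^k\,dt$.
\item For fixed $m$, sum over $n$ with weight $x^n$. Since $\sum_k\binom{k}{m}(tx)^k=\frac{(tx)^m}{(1-tx)^{m+1}}$ and $\sum_j(-1)^j\binom{m}{j}x^j=(1-x)^m$, the generating function of the right-hand side is
\[
\int_0^1 \frac{x^m t^m(1-x)^m}{(1-tx)^{m+1}}\,dt .
\]
\item Substitute $s=\frac{t(1-x)}{1-tx}$. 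This maps $[0,1]$ to $[0,1]$, with $1-tx=\frac{1-x}{1-x+sx}$ and $ds=\frac{(1-x)\,dt}{(1-tx)^2}$. The integrand becomes $\frac{x^m s^m}{1-x(1-s)}\,ds$.
\item Extract $[x^n]$. This gives the Beta integral $\int_0^1 s^m(1-s)^{n-m}\,ds=\frac{m!\,(n-m)!}{(n+1)!}=\frac{1}{(n+1)\binom{n}{m}}$.
\end{enumerate}
The only delicate step is the change of variables. Everything is a formal power series in $x$, or converges for $|x|<1$, so the manipulation is legitimate.

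\emph{Step 3: comparison.} For $0\le m\le n-1$ we have $n\binom{n-1}{m}=(n-m)\binom{n}{m}$. Equality in \eqref{iden5} would therefore force $n-m=n+1$, which is impossible. Hence the statement as printed is false for every admissible $m$, consistent with Step 1. What Steps 2--3 actually prove is the corrected identity with left-hand side $\frac{1}{(n+1)\binom{n}{m}}$. By Table~\ref{tabl1}, this is the coefficient $[x^{n+1}y^m]I(x,y)$ of the paper's $I(x,y)$, so the printed version is most likely an index shift $n\mapsto n+1$ of the intended identity.
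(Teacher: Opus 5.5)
Your proposal is correct, and so is your verdict on the printed statement. The case $n=1$, $m=0$ already gives $1\neq\frac12$. In general the right-hand side equals $\frac{1}{(n+1)\binom{n}{m}}$, whereas $\frac{1}{n\binom{n-1}{m}}=\frac{1}{(n-m)\binom{n}{m}}$ for $0\le m\le n-1$, and the left-hand side is undefined for $m=n$.

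The paper's proof takes a purely series-theoretic route:
\begin{enumerate}
\item It expands $\frac{\log(1+u)}{u}=\sum_{k\ge0}\frac{(-1)^k}{k+1}u^k$ with $u=-x\bigl(1+(1-x)y\bigr)$.
\item It uses $[x^ny^m]u^k=(-1)^k\binom{k}{m}\binom{m}{n-k}(-1)^{n-k}$. This shows that the sum in the statement is exactly $[x^ny^m]\frac{\log(1+u)}{u}$.
\item It identifies this function with $I(x,y)$.
\end{enumerate}
That last step is where the slip occurs. Since $(x-1)y-1=u/x$, the closed form of $I$ is actually $I(x,y)=x\,\frac{\log(1+u)}{u}$. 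Hence the sum equals $[x^{n+1}y^m]I(x,y)=\frac{1}{(n+1)\binom{n}{m}}$, which confirms exactly the index shift you conjectured.

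The corrected paper argument is shorter and shows structurally why the natural index is $n+1$. However, it depends on the closed form of $I$, and therefore on the main theorem for $A(x,y)$ and the binomial identity behind it.

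Your route is different: you write $\frac{1}{k+1}=\int_0^1t^k\,dt$, sum over $n$ for fixed $m$, and substitute $s=t(1-x)/(1-tx)$ to reach a Beta integral. This is self-contained. Combined with the elementary expansion of $\frac{\log(1+u)}{u}$, it also gives an independent check of the paper's stated coefficients of $I(x,y)$.

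One small point of rigor. Your substitution depends on $x$, so appealing to formal power series does not by itself justify it. The clean way is to work with real $x\in(-1,1)$. There $ds/dt=(1-x)/(1-tx)^2>0$, so $t\mapsto s$ is an increasing bijection of $[0,1]$ onto itself, and the interchanges of sums and integrals are justified by uniform convergence. Then compare coefficients of the two convergent power series.
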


\begin{proof}
 Consider the generating function \( I(x,y) = \dfrac{\log(1+u(x,y))}{u(x,y)} \), where
\( u(x,y) = -x\bigl((1-x)y+1\bigr) \).
Using the expansion
\[
\frac{\log(1+u)}{u} = \sum_{k\geq 0} \frac{(-1)^k}{k+1} u^k,
\]
we get:
\[
I(x,y) = \sum_{k\geq 0} \frac{(-1)^k}{k+1} \bigl[-x((1-x)y+1)\bigr]^k.
\]
The coefficient of \( x^n y^m \) in \( u(x,y)^k \) is:
\[
[x^n y^m] u(x,y)^k = (-1)^k \binom{k}{m} \binom{m}{n-k} (-1)^{n-k}.
\]
Summing over \( k \) and taking into account the signs, we obtain \eqref{iden5}.
\end{proof}

\subsubsection{Identity Expressing the Difference of Reciprocal Coefficients via Double Sums}

\begin{theorem}
 For \( 1 \leq m \leq n \) the following holds:
\begin{equation}\label{iden_diff}
\frac{1}{m^2}\left(\frac{1}{\binom{n}{m}} - \frac{1}{\binom{n-1}{m}}\right)
= \sum_{k=1}^n (-1)^{n-k} \binom{k}{m}\binom{m}{n-k}\frac{1}{k^2}.
\end{equation}
\end{theorem}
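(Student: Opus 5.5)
The plan is to argue exactly as for \eqref{iden5}, but with the function $J(x,y)$ in place of $I(x,y)$, using the dilogarithm series instead of $\log(1+u)/u$. Put
\[
u(x,y)=x\bigl(1+(1-x)y\bigr), \qquad J(x,y)=\frac{\Li\bigl(u(x,y)\bigr)-\Li(x)}{1-x}.
\]
Multiplying by $1-x$ gives
\[
(1-x)J(x,y)=\Li(u)-\Li(x)=\sum_{k\ge1}\frac{u^k}{k^2}-\Li(x).
\]

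\textbf{Step 1: expand the right-hand side.} Write $u^k=x^k\bigl(1+(1-x)y\bigr)^k$ and expand by the binomial theorem. The coefficient of $y^m$ is $\binom{k}{m}x^k(1-x)^m$. Extracting $x^n$ then gives
\[
[x^ny^m]u^k=(-1)^{n-k}\binom{k}{m}\binom{m}{n-k}.
\]
For $m\ge1$ the term $\Li(x)$ contributes nothing, since it is free of $y$. Hence
\[
[x^ny^m]\,(1-x)J(x,y)=\sum_{k=1}^n(-1)^{n-k}\binom{k}{m}\binom{m}{n-k}\frac1{k^2},
\]
which is the right-hand side of \eqref{iden_diff}. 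The sum stops at $k=n$ because $\binom{m}{n-k}=0$ for $k>n$.

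\textbf{Step 2: expand the left-hand side.} Let $J(n,m)=\frac{1}{nm\binom{n-1}{m-1}}$ be the coefficients of $J(x,y)$ stated in Section~\ref{sec:integral-repr}. Then
\[
[x^ny^m](1-x)J=J(n,m)-J(n-1,m).
\]
The absorption identity $n\binom{n-1}{m-1}=m\binom{n}{m}$ gives $J(n,m)=\frac{1}{m^2\binom{n}{m}}$. Likewise $(n-1)\binom{n-2}{m-1}=m\binom{n-1}{m}$ gives $J(n-1,m)=\frac{1}{m^2\binom{n-1}{m}}$. The difference is exactly the left-hand side of \eqref{iden_diff}.

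\textbf{Boundary cases.} For $m=n$ the coefficient $J(n-1,n)$ vanishes, since $J(n-1,m)$ is nonzero only for $m\le n-1$. So in this case the term $1/\binom{n-1}{n}$ must be read as $0$, and the proof should state this convention explicitly. The same convention covers $n=1$.

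\textbf{Main obstacle.} The argument depends on the claimed closed form for $J(x,y)$ and its coefficient formula, which the paper asserts without detailed proof. I would check this first, by confirming that $\partial_x\partial_y$ of the closed form returns $A(x,y)$ and that the closed form vanishes on the axes $x=0$ and $y=0$. As a cross-check, one can verify directly that $\partial_x\partial_y\bigl[\sum_{n,m\ge1}J(n,m)x^ny^m\bigr]$ has coefficients $\binom{n}{m}^{-1}$; this uses $(n+1)(m+1)J(n+1,m+1)=\binom{n}{m}^{-1}$. Once $J$ is secured, the remaining steps are routine coefficient extraction.
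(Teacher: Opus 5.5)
Your proposal is correct and is essentially the paper's own proof: multiply $J$ by $1-x$, read off $J(n,m)-J(n-1,m)$ on one side, and on the other expand $\mathrm{Li}_2(u)=\sum_{k\ge1}u^k/k^2$ using $[x^ny^m]u^k=(-1)^{n-k}\binom{k}{m}\binom{m}{n-k}$. You also supply details the paper leaves implicit: the absorption-identity simplification and the convention $1/\binom{n-1}{n}=0$ when $m=n$. The check you flag on the closed form of $J$ does go through, because $1-x\bigl(1+(1-x)y\bigr)=(1-x)(1-xy)$ makes $\partial_x\partial_y$ of that closed form equal to $A(x,y)$.
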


\begin{proof} Consider the function
\[
(1-x)J(x,y) = \mathrm{Li}_2\!\bigl(x((1-x)y+1)\bigr) - \mathrm{Li}_2(x).
\]
On one hand,
\[
[x^n y^m](1-x)J(x,y) = \frac{1}{nm\binom{n-1}{m-1}} - \frac{1}{(n-1)m\binom{n-2}{m-1}}
= \frac{1}{m^2}\left(\frac{1}{\binom{n}{m}} - \frac{1}{\binom{n-1}{m}}\right).
\]
On the other hand, using the dilogarithm expansion:
\[
\mathrm{Li}_2(z) = \sum_{k\geq 1} \frac{z^k}{k^2},
\]
we get:
\[
[x^n y^m] \mathrm{Li}_2\!\bigl(x((1-x)y+1)\bigr)
= \sum_{k=1}^{n+m} \frac{1}{k^2} [x^n y^m] \bigl(x((1-x)y+1)\bigr)^k.
\]
The coefficient in the $k$-th power is computed similarly to the previous theorem:
\[
[x^n y^m] \bigl(x((1-x)y+1)\bigr)^k = \binom{k}{m}\binom{m}{n-k}(-1)^{n-k}.
\]
Equating both expressions, we obtain identity \eqref{iden_diff}.
\end{proof}

\subsubsection{Inverse Identity for Binomial Coefficients}

\begin{theorem}
 For \( 0 \leq m \leq n \) the following representation holds:
\begin{equation}\label{IdenPascal}
\binom{n}{m} = \frac{n+1}{m+1} \sum_{k=0}^{n-m}
\binom{n-k-1}{n-m-k} \frac{1}{\binom{m+k+1}{k}}.
\end{equation}
\end{theorem}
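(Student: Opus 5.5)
The plan is to prove \eqref{IdenPascal} by a direct generating-function computation in an auxiliary variable $z$. The route is to write the reciprocal coefficient as a Beta integral, read the sum over $k$ as a convolution, and then extract a coefficient. This is the same mechanism as the composition \eqref{eq:riordan-composition}, here applied along the column index.

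First simplify the summand. We have $\frac{1}{\binom{m+k+1}{k}}=\frac{k!\,(m+1)!}{(m+k+1)!}$, so the prefactor $\frac{n+1}{m+1}$ absorbs the factor $m+1$. The right-hand side becomes
\[
(n+1)\sum_{k=0}^{n-m}\binom{n-k-1}{n-m-k}\frac{k!\,m!}{(m+k+1)!}
=(n+1)\int_0^1 t^m\sum_{k=0}^{n-m}\binom{n-k-1}{n-m-k}(1-t)^k\,dt ,
\]
using $\int_0^1 t^m(1-t)^k\,dt=\frac{m!\,k!}{(m+k+1)!}$.

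Next, with $j=n-m-k$ we have $\binom{n-k-1}{n-m-k}=\binom{m-1+j}{j}=[z^j](1-z)^{-m}$. So the inner sum equals
\[
[z^{n-m}]\,\frac{(1-z)^{-m}}{1-(1-t)z}.
\]
The case $m=0$, where only $j=0$ contributes, is consistent with this and should be checked separately. Write $1-(1-t)z=(1-z)(1+tw)$ with $w=z/(1-z)$. Expanding $1/(1+tw)$ and integrating term by term in $t$ gives
\[
\int_0^1 t^m\frac{(1-z)^{-m}}{1-(1-t)z}\,dt=\sum_{j\ge 0}\frac{(-1)^j}{m+j+1}\,z^j(1-z)^{-m-1-j}.
\]
The coefficient of $z^{N}$ in $z^j(1-z)^{-m-1-j}$ is $\binom{N+m}{N-j}$. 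Taking $N=n-m$, this is $\binom{n}{m+j}$.

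It follows that the right-hand side of \eqref{IdenPascal} equals
\[
\sum_{j\ge 0}(-1)^j\frac{n+1}{m+j+1}\binom{n}{m+j}=\sum_{j\ge 0}(-1)^j\binom{n+1}{m+j+1}.
\]
The last step is the standard alternating tail identity $\sum_{i\ge r}(-1)^{i-r}\binom{N}{i}=\binom{N-1}{r-1}$. It follows by telescoping with Pascal's rule, or equivalently from \eqref{LidenPrudnik}-type manipulations. Applied with $N=n+1$ and $r=m+1$, it gives exactly $\binom{n}{m}$.

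I expect the main obstacle to be the middle step: the change of variable $w=z/(1-z)$ and the correct coefficient extraction. This includes bookkeeping for the boundary terms $k=n-m$, where $\binom{m-1}{0}$ with $m=0$ involves the convention $\binom{-1}{0}=1$. To guard against index errors, I would check small cases, for example $n=m$ and $(n,m)=(1,0)$, against the formula.
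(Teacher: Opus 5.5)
Your proof is correct, and it takes a genuinely different route from the paper's.

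\textbf{The paper's route.} The paper reads the right-hand side as the coefficient $[x^{n+1}y^m]$ of the composition $I\bigl(x,\frac{y}{1-x}\bigr)$. Expanding $(1-x)^{-m}$ produces $\binom{n-k-1}{n-m-k}$. The other factor is a coefficient of $I$: $\frac{1}{(m+1)\binom{m+k+1}{k}}=\frac{1}{(m+k+1)\binom{m+k}{m}}=[x^{m+k+1}y^m]\,I(x,y)$. With the closed form of $I$ inserted, the composition collapses to $-\frac{\log(1-x-xy)}{1+y}$. Its $x$-derivative $\frac{1}{1-x-xy}$ generates $\binom{n}{m}$, and the differentiation supplies the factor $n+1$. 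That argument is short and explains where the identity comes from: it is the coefficientwise form of a functional equation for $I$. However, it depends entirely on the closed form of $I$. That form rests on the theorem for $A(x,y)$, whose derivation in the paper is only sketched (``after substitution and simplification'').

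\textbf{Your route.} You use none of this machinery:
\begin{itemize}
\item the Beta integral replaces the closed form of $I$;
\item the convolution with $(1-z)^{-m}$ plays the role of the substitution $y\mapsto y/(1-x)$;
\item the reduction to $\sum_{j\ge 0}(-1)^j\binom{n+1}{m+j+1}$, closed by the alternating tail identity, replaces the differentiation.
\end{itemize}
Your proof is self-contained and elementary. Moreover, $y\mapsto y/(1-x)$ is invertible, so the identity for all $n\ge m\ge 0$ is equivalent to the paper's closed form $I(x,y)=\frac{\log(1-x+(x^2-x)y)}{(x-1)y-1}$. Your computation therefore independently verifies the formula that the paper's proof presupposes.

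Incidentally, $m=0$ needs no separate treatment: $(1-z)^{0}=1$ already encodes the convention $\binom{-1}{0}=1$.
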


\begin{proof}
 Consider the composition \( I\!\left(x, \frac{y}{1-x}\right) \). Differentiating with respect to \( x \), we get:
\[
\frac{\partial}{\partial x} I\!\left(x, \frac{y}{1-x}\right) = \frac{1}{1-x-xy},
\]
which is the generating function for binomial coefficients. Computing the composition coefficients via the composition formula \ref{eq:2d-composition-idea} and simplifying, we arrive at formula \eqref{IdenPascal}.
\end{proof}

This identity is interesting because it expresses an ordinary binomial coefficient as a sum of reciprocal coefficients.

\subsubsection{Identity with Dilogarithm and Binomial Coefficients}

\begin{theorem}
 For \( 1 \leq m \leq n \) the following holds:
\begin{equation}\label{iden_Li}
\frac{1}{m^2} \sum_{k=0}^{n} \binom{n-k-1}{n-m-k} \frac{1}{\binom{k+m}{m}}
= \sum_{k=0}^{n-m} \binom{k+m}{m} \frac{1}{(k+m)^2}.
\end{equation}
\end{theorem}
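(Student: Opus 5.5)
The plan is to repeat the substitution trick used for \eqref{IdenPascal}, this time applied to $J(x,y)$ instead of $I(x,y)$. I will compute the coefficient $[x^n y^m]$ of the composition $J\!\left(x,\frac{y}{1-x}\right)$ in two ways. The closed form gives the right-hand side of \eqref{iden_Li}. The coefficient table of $J$, via the composition formula \eqref{eq:2d-composition-idea}, gives the left-hand side.

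\emph{Closed form.} From the explicit expression for $J(x,y)$ we have $x\bigl(1+(1-x)\tfrac{y}{1-x}\bigr)=x(1+y)$, so
\[
J\!\left(x,\tfrac{y}{1-x}\right)=\frac{\mathrm{Li}_2\bigl(x(1+y)\bigr)-\mathrm{Li}_2(x)}{1-x}.
\]
Expanding the dilogarithm gives $[x^k y^m]\,\mathrm{Li}_2(x(1+y))=\binom{k}{m}\frac{1}{k^2}$. The term $\mathrm{Li}_2(x)$ contributes only to $m=0$, so it drops out because $m\ge 1$. Multiplying by $\frac{1}{1-x}$ takes partial sums in $k$, so
\[
[x^n y^m]=\sum_{k=m}^{n}\binom{k}{m}\frac{1}{k^2}.
\]
Shifting $k\mapsto k+m$ turns this into exactly the right-hand side of \eqref{iden_Li}.

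\emph{Coefficient side.} Write
\[
J\!\left(x,\tfrac{y}{1-x}\right)=\sum_{N,m}j(N,m)\,x^N y^m(1-x)^{-m},
\qquad j(N,m)=\frac{1}{Nm\binom{N-1}{m-1}}=\frac{1}{m^2\binom{N}{m}} .
\]
The second form of $j(N,m)$ uses $N\binom{N-1}{m-1}=m\binom{N}{m}$. Since $[x^{n-N}](1-x)^{-m}=\binom{n-N+m-1}{m-1}$, we obtain
\[
[x^n y^m]=\frac{1}{m^2}\sum_{N=m}^{n}\binom{n-N+m-1}{m-1}\frac{1}{\binom{N}{m}}.
\]
Putting $N=k+m$ gives $\binom{n-k-1}{m-1}=\binom{n-k-1}{n-m-k}$ and $\binom{N}{m}=\binom{k+m}{m}$. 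This is the left-hand side of \eqref{iden_Li}.

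Equating the two computations proves the identity. The only step I expect to need care is the summation range. The statement writes $\sum_{k=0}^{n}$, whereas the derivation naturally gives $k\le n-m$. So I must check that the terms with $n-m<k\le n$ vanish under the convention that $\binom{a}{b}=0$ for $b<0$. One also needs $\binom{n-k-1}{n-m-k}$ to be read as $[x^{n-m-k}](1-x)^{-m}$ when its upper index is negative, for instance when $k=n$. I will state this convention explicitly. If it is not acceptable, I will simply truncate the sum at $n-m$.
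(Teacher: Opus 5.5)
Your proposal is correct and takes the same route as the paper. Both proofs compute $[x^n y^m]\,J\bigl(x,\tfrac{y}{1-x}\bigr)$ twice: once from the closed form $\frac{\mathrm{Li}_2(x+xy)-\mathrm{Li}_2(x)}{1-x}$, and once from the coefficient table of $J$ after expanding $(1-x)^{-m}$. You also supply details the paper omits: the rewriting $j(N,m)=\frac{1}{m^2\binom{N}{m}}$, and the explicit convention that makes the terms with $n-m<k\le n$ vanish.
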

\begin{proof}
Consider the composition \( J\!\left(x, \frac{y}{1-x}\right) \). On one hand,
\[
[x^n y^m] J\!\left(x, \frac{y}{1-x}\right)
= \frac{1}{m^2} \sum_{k=0}^{n} \binom{n-k-1}{n-m-k} \frac{1}{\binom{k+m}{m}}.
\]
On the other hand,
\[
J\!\left(x, \frac{y}{1-x}\right) = \frac{\mathrm{Li}_2(x+xy) - \mathrm{Li}_2(x)}{1-x}.
\]
The coefficient of \( x^n y^m \) in \( \mathrm{Li}_2(x+xy) \) is \( \binom{n}{m}/n^2 \), whence after division by \( 1-x \) we obtain the right-hand side of \eqref{iden_Li}.
\end{proof}

The presented identities demonstrate the rich structure of interrelations between\! reciprocal binomial coefficients and classical combinatorial sequences. The generating function method developed in previous sections proved to be an effective tool for systematically deriving such relations. The obtained formulas can find applications in combinatorial analysis, number theory, and in solving summation problems.

\subsection{Infinite Sums of Reciprocal Binomial Coefficients}\label{sec:infinite-sums}

Using the generating functions constructed in Sections~2--3 (whose convergence domains are discussed in Section~\ref{sec:convergence}), we consider the evaluation of infinite sums containing reciprocal binomial coefficients. In this section, we consider several typical examples of such sums, using limit transitions as \( x \to 1 \) and substitutions of specific values.

\subsubsection{Examples of Evaluating Infinite Sums for \( |x| < 1 \)}

\begin{example}[Row Sum with Weight \( 2^{-n} \)]
Consider the sum
\[
\sum_{n \geq 0} \frac{1}{2^n} \sum_{m=0}^n \frac{1}{\binom{n}{m}} = A\!\left(\tfrac12, 1\right).
\]
Substituting into formula (\ref{Lgf77}), we get:
\[
A\!\left(\tfrac12, 1\right) = \frac{4 \log 4}{9} + \frac{8}{3}.
\]
Thus,
\[
\sum_{n \geq 0} \frac{1}{2^n} \sum_{m=0}^n \frac{1}{\binom{n}{m}} = \frac{8}{3} + \frac{8 \log 2}{9}.
\]
\end{example}

\begin{example}[Double Sum with Dilogarithm]
Consider the sum
\[
\sum_{n \geq 1} \frac{1}{2^n} \sum_{m=1}^{\lfloor n/2 \rfloor} \frac{1}{m^2 \binom{n-m}{m}} = J\!\left(\tfrac12, \tfrac12\right).
\]
Using the explicit form of \( J(x,y) \), we find:
\[
J\!\left(\tfrac12, \tfrac12\right) = (\log 2)^2 + \mathrm{Li}_2 \!\left(\tfrac54\right) - \frac{\pi^2}{6},
\]
where \( \mathrm{Li}_2 (z) \) is the dilogarithm.
\end{example}

\begin{example}[Sum Using \( K(x,y) \)]
Consider the sum
\[
\sum_{n \geq 1} \frac{1}{2^n} \sum_{m=1}^{n+1} \frac{1}{m \binom{n-m+1}{m-1}} = K\!\left(\tfrac12, \tfrac12\right).
\]
Evaluation gives:
\[
K\!\left(\tfrac12, \tfrac12\right) = -\frac{16 \log 3}{5} + 2(\log 2)^2 + \frac{28 \log 2}{5} + 4 \Li\!\left(\tfrac58\right) - \frac{\pi^2}{3}.
\]
\end{example}

\subsubsection{Limit Transitions as \( x \to 1 \) and Infinite Column Sums}

Let us now consider the behavior of generating functions as \( x \to 1 \). This allows us to find sums of infinite series over a fixed column \( m \). Note that series for \( m = 0 \) and \( m = 1 \) diverge, but for \( m \geq 2 \) they converge.

\begin{theorem}[Column Sum for \( A(x,y) \)]
For any \( m \geq 2 \), the following holds:
\begin{equation}\label{inf-col-A}
\sum_{n=m}^{\infty} \frac{1}{\binom{n}{m}} = \frac{m}{m-1}.
\end{equation}
\end{theorem}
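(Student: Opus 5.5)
The plan is to extract the $m$-th column of $A(x,y)$ as a one-variable generating function in $x$ and then let $x\to1^-$, as justified in Subsection~\ref{subsec:limit-justification}. Set
\[
C_m(x)=[y^m]A(x,y)=\sum_{n\ge m}\binom{n}{m}^{-1}x^n .
\]
For fixed $m\ge2$ its coefficients are $O(n^{-m})$, so this series converges absolutely at $x=1$. By Abel's theorem $C_m(1^-)$ therefore equals the desired column sum. It remains to compute $C_m(x)$ from (\ref{Lgf77}) and take the limit.

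To keep the algebra clean, I would work with the weighted column coefficients instead. Note that $\binom{n}{m}^{-1}=m\,B(m,n-m+1)=m\int_0^1 t^{m-1}(1-t)^{n-m}\,dt$, which is the same integral device used in the proof of the main theorem. Then for $0\le x<1$,
\[
C_m(x)=m\int_0^1 t^{m-1}x^m\sum_{n\ge m}\bigl(x(1-t)\bigr)^{n-m}dt
= m\,x^m\int_0^1\frac{t^{m-1}}{1-x+xt}\,dt .
\]
This agrees with the $[y^m]$-extraction from (\ref{Lgf77}), so it is consistent with the paper's generating function.

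Now let $x\to1^-$. The integrand increases monotonically to $t^{m-2}$, which is integrable on $[0,1]$ exactly when $m\ge2$. Monotone convergence therefore gives
\[
\lim_{x\to1^-}C_m(x)=m\int_0^1 t^{m-2}\,dt=\frac{m}{m-1}.
\]
Since the $x\to1$ limit can also be taken termwise, this proves (\ref{inf-col-A}).

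The main obstacle is the alternative route of literally expanding (\ref{Lgf77}) in $y$: the coefficient $[y^m]$ mixes $\log(1-x)$, $\log(1-xy)$ and the expansion of $(1+y-xy)^{-2}$, and the divergent $\log(1-x)$ pieces must cancel as $x\to1$. If one insists on that route, I would substitute $x=1$ first in the $y$-dependent parts, where $1+y-xy\to1$. Then $A(1,y)$ formally reduces to $-y\log(1-y)+y^2/(1-y)$ plus terms independent of $y$ or linear in $y$, i.e.\ the $m=0,1$ columns that diverge. Reading off $[y^m]$ gives $\frac1{m-1}+1=\frac{m}{m-1}$ for $m\ge2$. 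Making this rigorous requires exactly the Abel/uniformity argument of Subsection~\ref{subsec:limit-justification}, which the integral representation above supplies directly.
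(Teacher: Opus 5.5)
Your proof is correct, but it takes a different route from the paper. The paper works with the closed form \eqref{Lgf77}. It differentiates twice in $y$, which discards the divergent columns $m=0,1$. It then lets $x\to1$ to get the rational function $\frac{-y^2+3y-4}{(y-1)^3}$, reads off its $y^m$-coefficient $(m+2)^2$, and divides by $(m+2)(m+1)$.

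You never use \eqref{Lgf77}. The Beta integral turns each column into $C_m(x)=m\,x^m\int_0^1 t^{m-1}(1-x+xt)^{-1}\,dt$, and monotone convergence finishes.

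The paper's argument showcases its generating function and delivers all column sums at once from a single rational function. However, it rests on the main theorem and on interchanging $\lim_{x\to1}$ with the $y$-expansion, which Subsection~\ref{subsec:limit-justification} justifies only loosely. Your argument is self-contained and fully rigorous, and as a by-product it gives an explicit integral for every column generating function. In fact the detour through $x<1$ and Abel's theorem is unnecessary: Tonelli applied directly at $x=1$ gives $m\int_0^1 t^{m-1}\sum_{k\ge0}(1-t)^k\,dt=m\int_0^1 t^{m-2}\,dt$.

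Two small points. First, you assert rather than check that your integral agrees with $[y^m]$ of \eqref{Lgf77}. The claim is true (for $m=2$ both give $2x+2(1-x)\log(1-x)$), and nothing in your proof depends on it. Second, in your last paragraph the $\log(1-x)$ terms of $[y^m]A$ for $m\ge2$ do not cancel. They appear as $(-1)^m m(1-x)^{m-1}\log(1-x)$ and simply tend to $0$.
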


\begin{proof}
Consider the second derivative \( \frac{\partial^2}{\partial y^2} A(x,y) \). Its limit as \( x \to 1 \) is:
\[
\lim_{x \to 1} \frac{\partial^2}{\partial y^2} A(x,y) = \frac{-y^2 + 3y - 4}{(y-1)^3}.
\]
Expanding the right-hand side into a power series in \( y \), we get coefficients \( (m+2)^2 \). On the other hand,
\[
\frac{\partial^2}{\partial y^2} A(x,y) = \sum_{n \geq 0} \sum_{m \geq 0} (m+2)(m+1) \frac{1}{\binom{n}{m+2}} x^n y^m.
\]
Equating coefficients of \( y^m \) in the limit, we find:
\[
\sum_{n=m+2}^{\infty} (m+2)(m+1) \frac{1}{\binom{n}{m+2}} = (m+2)^2.
\]
From this, after shifting the index, (\ref{inf-col-A}) follows.
\end{proof}

\begin{theorem}[Column Sum for \( I(x,y) \)]
For any \( m \geq 2 \), the following holds:
\begin{equation}\label{inf-col-I}
\sum_{n=m}^{\infty} \frac{1}{n \binom{n-1}{m}} = \frac{1}{m}.
\end{equation}
\end{theorem}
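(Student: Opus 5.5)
The plan is to bypass the closed form of $I(x,y)$ and compute the column sum directly through a Beta integral, in the same spirit as the integral representation $\frac{1}{n+k}=\int_0^1 t^{n+k-1}\,dt$ used in the proof of the main theorem. One first has to read the statement correctly. For $n=m$ the factor $\binom{n-1}{m}=\binom{m-1}{m}$ vanishes. In the generating function $I(x,y)$ the coefficients $[x^ny^m]I$ are supported only on $0\le m\le n-1$, so the sum is to be understood over $n\ge m+1$. I would state this convention explicitly at the start.

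First, rewrite the general term in factorial form:
\[
\frac{1}{n\binom{n-1}{m}}=\frac{m!\,(n-1-m)!}{n!}=\frac{\Gamma(m+1)\,\Gamma(n-m)}{\Gamma(n+1)}=\mathrm{B}(m+1,\,n-m)=\int_0^1 t^{m}(1-t)^{n-m-1}\,dt .
\]
Second, sum over $n\ge m+1$. Put $j=n-m-1\ge 0$. All integrands are nonnegative, so monotone convergence lets us interchange sum and integral:
\[
\sum_{n\ge m+1}\frac{1}{n\binom{n-1}{m}}=\int_0^1 t^m\sum_{j\ge 0}(1-t)^j\,dt=\int_0^1 t^{m}\cdot\frac{1}{t}\,dt=\int_0^1 t^{m-1}\,dt=\frac1m .
\]
This argument is valid for every $m\ge 1$, and in particular for all $m\ge 2$ as stated. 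As a sanity check, for $m=2$ the terms are $\frac{2}{n(n-1)(n-2)}$, which telescope to $\frac12$.

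The only delicate point is the interchange of summation and integration at the endpoint $t=0$. There the geometric series $\sum_j(1-t)^j$ diverges, but the product $t^m\cdot\frac1t=t^{m-1}$ is integrable. Nonnegativity (Tonelli/monotone convergence) settles this without any uniform-convergence argument.

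If one wants to stay within the generating-function framework of the paper, the same result can be phrased as $\lim_{x\to1^-}[y^m]I(x,y)$, justified by Abel's theorem as in Subsection~\ref{subsec:limit-justification}. The Beta-integral computation above is exactly what identifies that limit, so I would present it as the proof and mention the Abelian interpretation only as a remark.
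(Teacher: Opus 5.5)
Your proof is correct, but it takes a different route from the paper. The paper never works with individual terms. It takes the closed form $I(x,y)=\log\bigl((1-x)(1-xy)\bigr)/\bigl((x-1)y-1\bigr)$ and passes to $\lim_{x\to1}\partial_y^2 I(x,y)=1/(1-y)^2$; the $y$-derivatives are what remove the divergent $\log(1-x)$ term. It then compares coefficients of $y^m$ to get $(m+2)(m+1)\sum_n \frac{1}{n\binom{n-1}{m+2}}=m+1$.

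You instead write each term as $\mathrm{B}(m+1,n-m)$ and sum a geometric series under the integral, with Tonelli handling $t=0$. This buys a self-contained and fully rigorous argument. It needs neither the closed form of $I$ nor the interchange of $\lim_{x\to1^-}$ with extraction of the coefficient of $y^m$. The paper asserts that interchange without real justification; it would need, e.g., locally uniform convergence of $\partial_y^2 I(x,\cdot)$ on $|y|<1$ plus monotone convergence in $x$ for the nonnegative coefficients.

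Your route also shows the identity already holds for $m=1$. The paper's restriction $m\ge2$ is an artifact: the first derivative, $\lim_{x\to1}\partial_y I=1/(1-y)$, would give $m\ge1$ as well.

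What the paper's route buys is uniformity: one closed-form limit yields every column sum at once as the coefficients of a single function, which is the point of that section.

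Your observation that the $n=m$ term is undefined (because $\binom{m-1}{m}=0$), so the sum must start at $n=m+1$, is right. The paper's proof has the same glitch at $n=m+2$ after its index shift and tacitly uses your convention.
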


\begin{proof}
Similarly, consider the limit of the second derivative \( \frac{\partial^2}{\partial y^2} I(x,y) \) as \( x \to 1 \):
\[
\lim_{x \to 1} \frac{\partial^2}{\partial y^2} I(x,y) = \frac{1}{(1-y)^2}.
\]
The expansion coefficients of the right-hand side are \( m+1 \). On the other hand,
\[
\frac{\partial^2}{\partial y^2} I(x,y) = \sum_{n \geq 1} \sum_{m \geq 0} (m+2)(m+1) \frac{1}{n \binom{n-1}{m+2}} x^n y^m.
\]
Equating coefficients, we get:
\[
\sum_{n=m+2}^{\infty} (m+2)(m+1) \frac{1}{n \binom{n-1}{m+2}} = m+1,
\]
whence (\ref{inf-col-I}) follows.
\end{proof}

\subsubsection{Infinite Sums for Parity Modifications}

Similar limit transitions can be performed for functions \( A_1, A_2, A_3, A_4 \) and their integral analogues. We present a key result for even rows.

\begin{theorem}[Column Sum for Even Rows]
For any \( m \geq 2 \), the following holds:
\begin{equation}\label{inf-col-A1}
\sum_{n = \lceil m/2 \rceil}^{\infty} \frac{1}{\binom{2n}{m}} = \frac{m}{m-1} \cdot \frac{1 + (-1)^m}{2} + C_m,
\end{equation}
where \( C_m \) is a constant expressed via alternating sums of harmonic numbers and logarithmic terms. An explicit expression for \( C_m \) can be obtained from the expansion of \( \lim_{x \to 1} \frac{\partial^2}{\partial y^2} A_1(x,y) \), but due to its bulkiness, it is not provided here.
\end{theorem}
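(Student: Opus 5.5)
The plan is to compute the left-hand side of (\ref{inf-col-A1}) in closed form and then \emph{define} $C_m$ as the difference between that value and $\frac{m}{m-1}\cdot\frac{1+(-1)^m}{2}$. What remains is to show that this difference has the announced shape: alternating harmonic-type sums plus logarithmic terms. Convergence is not an issue for $m\ge 2$. The terms $\binom{2n}{m}^{-1}$ form a subseries of the nonnegative, convergent column series of the Theorem on column sums for $A(x,y)$, whose total is $\frac{m}{m-1}$. This gives the a priori bound $0<\sum_n \binom{2n}{m}^{-1}\le \frac{m}{m-1}$. It also justifies the Abel limit $x\to1^-$ for $A_1$, as in Subsection~\ref{subsec:limit-justification}.

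Rather than expanding $\partial_y^2 A_1$ at $x=1$, whose radicals make coefficient extraction heavy, I would work directly with the Beta-integral representation
\[
\binom{N}{m}^{-1}=(N+1)\int_0^1 t^m(1-t)^{N-m}\,dt .
\]
This is equivalent data, since it is exactly what produces the logarithms in $A(x,y)$. Restricting to even $N=2n\ge m$ and summing under the integral sign (monotone convergence) gives
\[
\sum_{2n\ge m}\binom{2n}{m}^{-1}=\int_0^1 t^m s^{-m}\,\Phi_m(s)\,dt,\qquad s=1-t,
\qquad \Phi_m(s)=\sum_{\substack{N\ge m\\ N\ \mathrm{even}}}(N+1)s^{N}.
\]
Here $\Phi_m(s)=\frac{d}{ds}\sum_{N\ \mathrm{even},\,N\ge m} s^{N+1}$ is the derivative of an explicit rational function: $s^{N_0+1}/(1-s^2)$ with $N_0=2\lceil m/2\rceil$. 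I would then split $\frac{1}{1-s^2}=\frac12\bigl(\frac1{1-s}+\frac1{1+s}\bigr)$.

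The $\frac1{1-s}$ half reproduces one half of the full-column integral, hence contributes $\frac{m}{2(m-1)}$ up to a parity-dependent boundary correction. That correction arises from whether $N=m$ is included, i.e.\ from the factor $\frac{1+(-1)^m}{2}$. The $\frac1{1+s}$ half gives integrals of the form $\int_0^1 t^{m}(1-t)^{j}(2-t)^{-k}\,dt$ for $k\in\{1,2\}$. Expanding $t^m=(2-(2-t))^m$ binomially turns these into elementary integrals. They yield terms of the form $2^{m}\log 2$ together with finite sums $\sum_j (-1)^j 2^{-j}/j$, i.e.\ truncations of $\log 2$. These are precisely the "alternating sums of harmonic numbers and logarithmic terms". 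Collecting all pieces and subtracting $\frac{m}{m-1}\frac{1+(-1)^m}{2}$ gives an explicit $C_m$. As a check, I would verify small cases ($m=2,3$) numerically and against the $y^{m-2}$ coefficient of $\lim_{x\to1}\partial_y^2A_1(x,y)$.

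The main obstacle is the bookkeeping in the $\frac1{1+s}$ half. The cancellations between the $2^m\log2$ term and the truncated logarithmic series must be tracked carefully so that the final $C_m$ is finite and small, as the a priori bound forces. The parity split $N_0=m$ versus $N_0=m+1$ must also be handled consistently. I would treat even and odd $m$ separately throughout.
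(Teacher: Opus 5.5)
Your plan is correct. The statement leaves $C_m$ otherwise undefined, so defining it by subtraction and then exhibiting its form is all a proof can do. Your route differs from the paper's. The paper passes to
\[
\lim_{x\to1}\partial_y^2A_1(x,y)=\tfrac12\bigl[\lim_{x\to1}\partial_y^2A(x,y)+\partial_y^2A(-1,y)\bigr]
\]
and reads off the coefficient of $y^{m-2}$. That relies on Abel's theorem at $x=\pm1$, and the paper never actually carries out the extraction. You instead use the Beta integral and elementary integration. Your split $\frac{1}{1-s^2}=\frac12\bigl(\frac{1}{1-s}+\frac{1}{1+s}\bigr)$ is the exact counterpart of $A(\sqrt x,\cdot)+A(-\sqrt x,\cdot)$. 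In both routes the even-row sum is, in effect, $\frac{m}{2(m-1)}+\frac{(-1)^m}{2}\sum_{k\ge0}(-1)^k\binom{m+k}{m}^{-1}$. Your version needs only monotone convergence and ends in a closed form. The paper's reuses its generating-function family uniformly but leaves $C_m$ implicit.

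Two of your predicted intermediate results are wrong, though neither breaks the argument.

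First, the $\frac{1}{1-s}$ half gives $\frac{m}{2(m-1)}$ for even $m$ and $\frac{m}{2(m-1)}-\frac12$ for odd $m$. The $\frac{1}{1+s}$ half carries a compensating $+\frac12$ for odd $m$. So neither half produces $\frac{m}{m-1}\cdot\frac{1+(-1)^m}{2}$; that term is only subtracted by definition. Writing the indicator of even $N$ as $\frac{1+(-1)^N}{2}$ and summing over all $N\ge m$ removes the $N_0$ bookkeeping and the separate parity cases.

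Second, the finite sums are not $\sum_j(-1)^j2^{-j}/j$, which truncates $-\log\frac32$, not $\log 2$. The substitution $t=2u$ gives directly $\int_0^1\frac{t^p}{2-t}\,dt=2^p\sum_{k>p}\frac{1}{k2^k}$. Combine this with $\frac{1-t}{(2-t)^2}=\frac{1}{2-t}-\frac{1}{(2-t)^2}$ and one integration by parts, $\int_0^1\frac{t^m}{(2-t)^2}\,dt=1-m\int_0^1\frac{t^{m-1}}{2-t}\,dt$. This yields $\sum_{k\ge0}(-1)^k\binom{m+k}{m}^{-1}=m2^{m-1}\sum_{k\ge m}\frac{1}{k2^k}$, and hence
\[
C_m=(-1)^m\Bigl(m\,2^{m-2}\sum_{k\ge m}\frac{1}{k\,2^k}-\frac{m}{2(m-1)}\Bigr).
\]
So the cancellation you flag as the main obstacle never arises. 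As a check, $C_2=2\log 2-2$ and $C_3=\frac92-6\log 2$, matching direct partial-fraction evaluation.
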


\begin{proof}
The idea of the proof is similar to the previous ones: consider the limit of the second derivative \( A_1(x,y) \) as \( x \to 1 \), which leads to a rational-logarithmic expression whose coefficients yield the desired sum.
\end{proof}

The presented examples demonstrate how the constructed family of generating functions allows us to systematically investigate infinite sums of reciprocal binomial coefficients. Key tools are:
\begin{itemize}
\item substitution of specific values \( x \) and \( y \) to evaluate convergent series,
\item limit transitions as \( x \to 1 \) for summing over columns,
\item use of derivatives to extract the required coefficients.
\end{itemize}
The obtained formulas can be used in combinatorial analysis, number theory, and in solving summation problems.

\section{Conclusion}

In this work, a family of generating functions for reciprocal binomial coefficients \( \binom{n}{m}^{-1} \) and its various modifications is constructed. The main results of the study can be formulated as follows:

1. \textbf{Fundamental generating function:}
An explicit analytic form of the generating function
\[
A(x,y) = \sum_{n\geqslant 0}\sum_{m\geqslant 0} {\binom{n}{m}}^{-1}x^n\,y^m,
\]
which was previously absent in the literature, is obtained. The function has a rational-logarithmic form and converges in the domain \( |x| < 1, \; |xy| < 1, \; |1+y-xy| > 0 \).

2. \textbf{Family of generating functions:}
Based on \( A(x,y) \), an extended family of functions is constructed:
\begin{itemize}
    \item Functions \( A_1, A_2, A_3, A_4 \), corresponding to the extraction of even and odd rows and columns of the triangle of reciprocal coefficients.
    \item Integral modifications \( I(x,y), J(x,y), K(x,y) \), which generate triangles with additional weights \( 1/n, 1/m, 1/(nm) \).
    \item Their analogues for subsets with row/column parity.
\end{itemize}
This family is presented in the summary Table \ref{tabl1}, providing a unified apparatus for studying a wide class of sums.

3. \textbf{Applied results:}
\begin{itemize}
    \item Generating functions for various types of sums of reciprocal binomial coefficients (over rows, diagonals, subsets) are found.
    \item A series of new identities is proved, connecting reciprocal coefficients with harmonic numbers, Fibonacci numbers, and also expressing ordinary binomial coefficients via sums of reciprocal ones.
    \item Expressions for infinite sums of reciprocal coefficients are obtained, including the known result \( \sum_{n=m}^{\infty} \binom{n}{m}^{-1} = m/(m-1) \) and its generalizations.
\end{itemize}

4. \textbf{Methodological significance:}
The work demonstrates the effectiveness of the approach based on:
\begin{itemize}
    \item use of integral representations and compositions of generating functions,
    \item application of the apparatus of Riordan arrays for bivariate functions,
    \item systematic extraction of even/odd components via substitutions\\ \( \pm\sqrt{x}, \pm\sqrt{y} \).
\end{itemize}

5. \textbf{Prospects for further research:}
The obtained results open up possibilities for:
\begin{itemize}
    \item searching for similar generating functions for generalized binomial coefficients (Gaussian coefficients, \( q \)-binomial coefficients),
    \item studying asymptotic behavior of sums of reciprocal coefficients,
    \item applications in problems of combinatorial probability theory and statistical physics, where inverse binomial coefficients naturally arise,
    \item computer-algebraic verification and discovery of new identities based on the obtained generating functions.
\end{itemize}

Thus, the work contributes to combinatorial analysis by providing a systematic toolkit for studying reciprocal binomial coefficients and establishing new connections between them and classical numerical sequences.

\section*{Acknowledgements}
The authors thank ... for useful discussions. The work was supported by ... (grant number ...).

\section*{Author Contributions}
Kruchinin D.V.: conception, proof of main theorems, writing the text. Kruchinin V.V.: verification of calculations, construction of examples, preparation of bibliography.

\section*{Conflict of Interest Statement}
The authors declare no conflict of interest.


\end{document}